\theoremstyle{plain}
\newtheorem{lem}{Lemma}[section]
\newtheorem{cor}[lem]{Corollary}
\newtheorem{prop}[lem]{Proposition}
\newtheorem{thm}[lem]{Theorem}
\newtheorem*{mthm*}{Main Theorem}
\theoremstyle{definition}
\newtheorem{defn}[lem]{Definition}
\newtheorem{ex}[lem]{Example}
\newtheorem{disc}[lem]{Remark}
\newtheorem{para}[lem]{}
\newtheorem{notation}[lem]{Notation}
\newtheorem{convention}[lem]{Convention}
\newtheorem*{convention*}{Convention}
\newcommand{\id}{\operatorname{id}}
\newcommand{\HH}{\operatorname{H}}
\newcommand{\im}{\operatorname{Im}}
\newcommand{\shift}{\mathsf{\Sigma}}
\newcommand{\Ker}{\operatorname{Ker}}
\newcommand{\bbz}{\mathbb{Z}}
\newcommand{\xra}{\xrightarrow}
\newcommand{\e}{\mathbf{e}}
\renewcommand{\geq}{\geqslant}
\renewcommand{\ker}{\Ker}
\newcommand{\Ext}[4][R]{\operatorname{Ext}_{#1}^{#2}(#3,#4)}
\newcommand{\Hom}{\operatorname{Hom}}	
\newcommand{\Tor}[4][R]{\operatorname{Tor}^{#1}_{#2}(#3,#4)}
\def\Tor{\operatorname{Tor}}
\def\Ext{\operatorname{Ext}}
\def\e{\mathrm{e}}
\def\Diff{\mathrm{Diff}}
\def\C{\mathcal{C}}
\def\K{\mathcal{K}}
\def\Der{\mathrm{Der}}
\newcommand{\grHom}{\operatorname{gr-Hom}}
\numberwithin{equation}{lem}
\begin{document}

\bibliographystyle{amsplain}

\title[Obstruction to na\"{\i}ve liftability of DG modules]{Obstruction to na\"{\i}ve liftability of DG modules}

\author{Saeed Nasseh}
\address{Department of Mathematical Sciences\\
Georgia Southern University\\
Statesboro, GA 30460, U.S.A.}
\email{snasseh@georgiasouthern.edu}

\author{Maiko Ono}
\address{Institute for the Advancement of Higher Education, Okayama University of Science, Ridaicho, Kitaku, Okayama 700-0005, Japan}
\email{ono@ous.ac.jp}

\author{Yuji Yoshino}
\address{Graduate School of Natural Science and Technology, Okayama University, Okayama 700-8530, Japan}
\email{yoshino@math.okayama-u.ac.jp}

\thanks{Y. Yoshino was supported by JSPS Kakenhi Grant 19K03448.}


\keywords{DG algebra, DG module, lifting, na\"{\i}ve lifting, weak lifting.}
\subjclass[2010]{13D07, 16E45.}

\begin{abstract}
The notion of na\"{\i}ve liftability of DG modules is introduced in~\cite{NOY} and~\cite{NOY2}. The main purpose of this paper is to explicitly describe the obstruction to na\"{\i}ve liftability along extensions $A\to B$ of DG algebras, where $B$ is projective as an underlying graded $A$-module. In particular, we give an explicit description of a DG $B$-module homomorphism which defines the obstruction to na\"{\i}ve liftability of a semifree DG $B$-module $N$ as a certain cohomology class in $\Ext^1_B(N,N\otimes_B J)$, where $J$ is the diagonal ideal. Our results on the obstruction class enable us to give concrete examples of DG modules that do and do not satisfy the na\"{\i}ve lifting property.
\end{abstract}

\maketitle


\section{Introduction}\label{sec20210306a}

Throughout the paper, $R$ is a commutative ring.\vspace{5pt}

Lifting and weak lifting properties have been studied in commutative ring theory by Auslander, Ding, and Solberg~\cite{auslander:lawlom} for modules and by Yoshino~\cite{yoshino} for complexes over certain ring extensions. These notions are tightly connected to the deformation theory of modules and have been applied in the theory of maximal Cohen-Macaulay approximations.
The existing lifting and weak lifting results for modules and complexes have been generalized recently in several papers~\cite{NOY, NOY2, nasseh:lql, nassehyoshino, OY} to the case of differential graded (DG) modules over certain DG algebra extensions.

There exist three versions of liftability for DG modules. Let $\varphi\colon A\to B$ be a DG $R$-algebra homomorphism, and let $N$ be a semifree DG $B$-module.
We say that $N$ is \emph{liftable} to $A$ if there is a semifree DG $A$-module $M$ such that $N\cong M\otimes_A B$. The DG $B$-module $N$ is \emph{weakly liftable} to $A$ (in the sense of~\cite{NOY}) if there are non-negative integers $a_1,\ldots,a_r$ such that the finite direct sum $N \oplus N(-a_1) \oplus \cdots \oplus N(-a_r)$ is liftable to $A$.
Finally, $N$ is {\it na\"ively liftable}  to $A$ if the map $\pi_N\colon N|_A\otimes_A B \to N$ defined by $\pi_N(n \otimes b) = nb$
is a split DG $B$-module epimorphism. Here, $N|_A$ denotes the DG $B$-module $N$ considered as a DG $A$-module via $\varphi$. If $N$  is na\"ively liftable to $A$, then it is a direct summand of the liftable DG $B$-module $N|_A\otimes_A B$; see Remark~\ref{para20210319a} for more details.
In the special case where $A$ and $B$ are commutative rings and $N$ is a $B$-module, definition of lifting (resp. weak lifting) translates to the existence of an $A$-module $M$ such that $N\cong M\otimes_A B$ (resp. $N$ is a direct summand of $M\otimes_A B$) and $\Tor_i^A(M,B)=0$ for all $i>0$.

Auslander, Ding, and Solberg~\cite[Proposition 1.6]{auslander:lawlom} proved a sufficient condition for liftability of modules over certain ring extensions. More precisely, they showed that
if $R$ is a complete local ring and $x\in R$ is a non-zero divisor, then a finitely generated $R/xR$-module $N$ is liftable to $R$ if $\Ext^2_{R/xR}(N,N)=0$.
The converse of this result does not hold in general; see~\cite[p. 282]{auslander:lawlom}. Unlike the lifting property, detecting weak liftability does not require the vanishing of the entire $\Ext^2$. More precisely, Auslander, Ding, and Solberg proved the following result.

\begin{thm}[\protect{\cite[Proposition 1.5]{auslander:lawlom}}]\label{thm20210821a'}
Let $R$ be a local ring and $x\in R$ be a non-zero divisor. A finitely generated $R/xR$-module $N$ is weakly liftable to $R$ if and only if a certain cohomology class
$[\Delta_N]$ in $\Ext^2_{R/xR}(N,N)$ vanishes.
\end{thm}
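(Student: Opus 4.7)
The plan is to represent $[\Delta_N]$ by a chain-level cocycle coming from a lift of a free $R/xR$-resolution, and then to deduce each direction of the equivalence from this representative. Let $(F_\bullet, d)\to N$ be a free $R/xR$-resolution. For each $i$, lift $F_i$ to a free $R$-module $\wt F_i$ of the same rank and lift $d_i$ to an $R$-linear map $\wt d_i\colon \wt F_i\to\wt F_{i-1}$. Since $d^2=0$, the composite $\wt d_{i-1}\wt d_i$ reduces to $0$ modulo $x$, so because $x$ is a non-zero-divisor on the free module $\wt F_{i-2}$ there is a unique map $\wt\Delta_i\colon\wt F_i\to\wt F_{i-2}$ with $\wt d_{i-1}\wt d_i = x\wt\Delta_i$. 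The mod-$x$ reduction $\Delta_\bullet$ is a degree $-2$ chain map of $F_\bullet$ into itself (the relation $\wt d\wt\Delta=\wt\Delta\wt d$ follows from computing $\wt d^3$ as $\wt d\cdot x\wt\Delta$ and as $x\wt\Delta\cdot\wt d$ and cancelling $x$), and its cohomology class $[\Delta_N]\in\Ext^2_{R/xR}(N,N)$ is independent of the choice of resolution and lifts by a standard homotopy argument.

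For the forward direction, suppose $N$ is weakly liftable: there is an $R$-module $M$ with $\Tor_i^R(M,R/xR)=0$ for all $i>0$ and $N\oplus Q \cong M\otimes_R R/xR$ for some $R/xR$-module $Q$. Reducing any free $R$-resolution of $M$ modulo $x$ yields a free $R/xR$-resolution of $N\oplus Q$ whose lifted differentials already satisfy $\wt d^2 = 0$ on the nose, giving $[\Delta_{N\oplus Q}]=0$. Since the construction is additive with respect to direct sums, $[\Delta_N]=0$.

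For the converse, assume $[\Delta_N] = 0$, fix a chain homotopy $s=(s_i)$ with $\Delta_i = d_{i-1}s_i+s_{i-1}d_i$ in $F_\bullet$, and lift it to $\wt s_i\colon \wt F_i\to\wt F_{i-1}$. The strategy is to assemble $\wt F_\bullet$ with a shifted copy of itself into an honest $R$-complex $\wt G_\bullet$ of finitely generated free $R$-modules whose differential $D$ has matrix entries built from $\wt d$, $\wt\Delta$, and multiplication by $x$, and whose square vanishes thanks to the identity $\wt d\wt\Delta=\wt\Delta\wt d$. Setting $M = H_0(\wt G_\bullet)$, the lifted homotopy $\wt s$ is then used to exhibit, via a block-triangular change of basis modulo $x$, a splitting of $\wt G_\bullet\otimes_R R/xR$ that displays $N$ as a direct summand of $M\otimes_R R/xR$.

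The main obstacle is precisely this last step in the converse: producing the matrix for $D$ so that $M=H_0(\wt G_\bullet)$ is a genuine weak lift, meaning $\Tor_i^R(M,R/xR)=0$ for all $i>0$. A naive construction using only $\wt d$, $\wt\Delta$, and $x$ can leave $M$ with $x$-torsion (the torsion appearing as $H_1$ of the reduced complex), in which case $M$ fails to be a weak lift on the nose. Resolving this requires a careful choice of entries, leveraging the lifted homotopy $\wt s$ to modify $\wt G_\bullet$ so that the resulting $M$ has $x$ as a non-zero-divisor while still exposing $N$ as a direct summand of the reduction. Pinning down this matrix and the subsequent $\Tor$-vanishing verification is where the real work of the proof lies.
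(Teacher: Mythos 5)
The statement you are proving is not actually proved in this paper: it is quoted verbatim from Auslander--Ding--Solberg, so there is no internal argument to compare against. The closest internal analogue is Theorem~3.8 (together with Theorem~3.6), where the obstruction $[f_N]=[\Delta_N]$ is by construction the class of the extension $0\to N\otimes_BJ\to N|_A\otimes_AB\to N\to 0$, so that vanishing is equivalent to splitting essentially tautologically; no doubled-complex construction is needed there. Within your own write-up, the construction of the representative ($\wt d^2=x\wt\Delta$, the chain-map property of $\Delta$, independence of choices) and the forward implication are correct and standard, modulo the routine remark that $[\Delta_{-}]$ is natural in the module, so that the $\Ext^2_{R/xR}(N,N)$-component of $[\Delta_{N\oplus Q}]$ is $[\Delta_N]$.

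The converse, however, is a plan rather than a proof, and the plan as stated runs into a provable obstruction that you acknowledge but do not resolve. The only differential built from $\wt d$, $\wt\Delta$, and $x$ alone whose square vanishes on the doubled complex $\wt G_n=\wt F_n\oplus\wt F_{n-1}$ is (up to signs) $D=\left(\begin{smallmatrix}\wt d & x\\ -\wt\Delta & -\wt d\end{smallmatrix}\right)$, and this demonstrably fails: one computes $H_0(\wt G)=\wt F_0/(\im\wt d_1+x\wt F_0)\cong N$, which has $x$-torsion, and indeed conjugating $D\otimes R/xR$ by $\left(\begin{smallmatrix}\id & 0\\ -s & \id\end{smallmatrix}\right)$ using your homotopy $s$ shows $\wt G\otimes_RR/xR\cong F_\bullet\oplus\shift F_\bullet$, whence $H_1(\wt G\otimes_RR/xR)\cong N\neq 0$; you have merely rebuilt an $R$-free resolution of $N$ itself. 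So the homotopy must enter the differential, not just the change of basis: lifting $s$ to $\wt s$ gives only a second-order identity $\wt\Delta-\wt d\wt s-\wt s\wt d=x\wt t$, and making $D^2=0$ hold on the nose while keeping $N$ a summand of the reduction with the complementary summand acyclic in positive degrees is exactly the content of the theorem (this is what the L-complex machinery of Yoshino, and the argument of Auslander--Ding--Solberg, is engineered to do). Since your proposal explicitly defers this step (``where the real work of the proof lies''), the converse direction -- hence the equivalence -- is not established.
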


We call $[\Delta_N]$ the \emph{obstruction class} to weak liftability. Similar statements for lifting and weak lifting hold when
we replace modules by bounded below complexes of finitely generated free modules; see~\cite{yoshino}. Further generalizations have been obtained in the DG algebra setting for simple extensions of DG algebras as follows.


\begin{thm}[\protect{\cite[Theorem 3.6]{nassehyoshino} and ~\cite[Theorem 4.7]{OY}}]\label{thm20200605a}
Assume $B=A\langle X\rangle$ is a simple free extension of a DG $R$-algebra $A$ obtained by adjunction of a variable $X$ to kill a cycle $x$ in $A$. Let $N$ be a semifree DG $B$-module. Then there exists an obstruction class $[\Delta_N]$ in $\Ext^{|X|+1}_{B}(N,N)$ such that
\begin{enumerate}[\rm(a)]
\item
if $|X|$ is odd, then $[\Delta_N]=0$ if and only if $N$ is weakly liftable to $A$ and
\item
if $|X|$ is even and $N$ is bounded below (i.e., $N_i=0$ for all $i\ll 0$), then $[\Delta_N]=0$ if and only if $N$ is liftable to $A$.
\end{enumerate}
\end{thm}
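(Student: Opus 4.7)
The plan is to reduce Theorem~\ref{thm20200605a} to a single universal obstruction class arising from the diagonal ideal $J = \ker(B \otimes_A B \to B)$. First I would construct $[\Delta_N]$: tensoring the short exact sequence $0 \to J \to B \otimes_A B \to B \to 0$ of DG $B$-bimodules with $N$ over $B$ on the right (noting that this sequence splits as right $B$-modules because $B$ is free over $A$) produces the canonical short exact sequence of DG $B$-modules
\begin{equation*}
0 \longrightarrow N \otimes_B J \longrightarrow N|_A \otimes_A B \xrightarrow{\ \pi_N\ } N \longrightarrow 0,
\end{equation*}
and I would define $[\Delta_N]$ as the class of this extension in $\Ext^1_B(N, N \otimes_B J)$. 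By construction, $[\Delta_N] = 0$ exactly when $\pi_N$ splits as a DG $B$-homomorphism, i.e.\ precisely when $N$ is na\"ively liftable.

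The second step is to compute $J$ explicitly for $B = A\langle X\rangle$. As a left DG $B$-module, $J$ is generated by the cycle $z = 1 \otimes X - X \otimes 1$ of degree $|X|$, and a direct analysis using the semifree basis of $B$ over $A$ (namely $\{1, X\}$ in the odd case, where $X^2=0$ forces $J$ to be free of rank one, and the divided powers $\{X^{(i)}\}_{i \ge 0}$ in the even case) would show that the map $B(-|X|) \to J$, $1 \mapsto z$, is an isomorphism (odd) or at least a quasi-isomorphism (even) of left DG $B$-modules. This yields
\begin{equation*}
\Ext^1_B(N, N \otimes_B J) \;\cong\; \Ext^1_B\bigl(N, N(-|X|)\bigr) \;\cong\; \Ext^{|X|+1}_B(N, N),
\end{equation*}
placing $[\Delta_N]$ in the Ext group specified by the theorem.

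The third step is to match na\"ive liftability with weak (respectively actual) liftability. For part (a), with $|X|$ odd, the identification $J \cong B(-|X|)$ holds on the nose, so the SES above becomes $0 \to N(-|X|) \to N|_A \otimes_A B \to N \to 0$. A DG $B$-splitting of $\pi_N$ exhibits $N|_A \otimes_A B \cong N \oplus N(-|X|)$, and since $N|_A \otimes_A B$ is liftable (with lift $N|_A$), this shows $N \oplus N(-|X|)$ is liftable, i.e.\ $N$ is weakly liftable with $r=1$ and $a_1 = |X|$. Conversely, any weak lift forces $[\Delta_N]=0$ by naturality of the extension class under summand inclusions and the fact that the obstruction of a liftable module vanishes. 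For part (b), with $|X|$ even and $N$ bounded below, na\"ive liftability provides a splitting $\sigma$ of $\pi_N$, and I would use $\sigma$ together with the semifree structure of $N$ to build a genuine lift $M$ over $A$ inductively, one divided power at a time. The bounded-below hypothesis guarantees that in each fixed homological degree only finitely many of the divided powers $X^{(i)}$ contribute, so the iterative construction of $M$ on each semibasis element of $N$ terminates.

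The hard part, I expect, will be the even case: pinning down the quasi-isomorphism $J \simeq B(-|X|)$ in the presence of divided powers and, more significantly, converting the na\"ive splitting $\sigma$ into an honest DG $A$-module lift $M$ with $M \otimes_A B \cong N$ via the bounded-below iteration. The odd case is essentially formal once the universal class has been constructed, because $J$ is then free of rank one over $B$ and the SES directly exhibits the weak lift.
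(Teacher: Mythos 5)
This statement is quoted by the paper from \cite{nassehyoshino} and \cite{OY} rather than proved in it, so the fair comparison is with the paper's own diagonal-ideal machinery, which is exactly what you are trying to use. Your first step is sound and coincides with Remark~3.4 and Theorems~3.8--3.9 of the paper: the class of the graded-split extension $0\to N\otimes_BJ\to N|_A\otimes_AB\to N\to 0$ in $\operatorname{Ext}^1_B(N,N\otimes_BJ)$ vanishes if and only if $N$ is na\"{\i}vely liftable. Your odd case can also be completed along the lines you sketch: for $|X|$ odd one checks that $J$ is free of rank one over $B$ on the cycle $\delta(X)=X^o\otimes 1-1^o\otimes X$ (it is a cycle because $\partial^J\delta(X)=\delta(x)=0$, as $x\in A$), that $J^2=0$, and that the twist $\delta(X)X=-X\delta(X)$ is absorbed by the shift sign convention, so $N\otimes_BJ\cong\shift^{|X|}N$ and the class lands in $\operatorname{Ext}^{|X|+1}_B(N,N)$; the converse direction via diagonality of the obstruction class on direct sums, shift invariance, and vanishing for liftable modules is also fine.

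The genuine gap is the even case, and it is not merely the induction you flag at the end. Your claim that $B(-|X|)\to J$, $1\mapsto\delta(X)$, is ``at least a quasi-isomorphism'' when $|X|$ is even is false. As a graded right $B^{\natural}$-module, $J^{\natural}$ is free on the countably many elements $\sigma\bigl((X^{(i)})^o\otimes 1\bigr)$, $i\geq 1$, of degrees $i|X|$; taking the degenerate instance $A=R$ and $x=0$, all differentials vanish, so $\HH(J)=J$ has rank $n$ in degree $n|X|$ while $\shift^{|X|}B$ has rank one there. What is true is only that $J/J^2\cong\shift^{|X|}B$ (free on the image of $\delta(X)$); in the odd case $J=J/J^2$ because $J^2=0$, but in the even case $J^2\neq 0$ and $J$ is strictly larger. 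Consequently your identification $\operatorname{Ext}^1_B(N,N\otimes_BJ)\cong\operatorname{Ext}^{|X|+1}_B(N,N)$ does not exist; at best the surjection $J\to J/J^2$ induces a map of Ext groups, and vanishing of the image of $[\Delta_N]$ in $\operatorname{Ext}^{|X|+1}_B(N,N)$ --- which is the hypothesis of part (b) --- is a priori weaker than $[\Delta_N]=0$, so na\"{\i}ve liftability does not follow from your reduction. Separately, even granting that, the passage from a na\"{\i}ve splitting to an honest lift $M$ with $M\otimes_AB\cong N$ for $|X|$ even and $N$ bounded below is precisely the substance of \cite[Theorem 4.7]{OY} (equivalently, of the $j$-operator analysis behind Theorem~\ref{cor20210822a}); your one-sentence divided-power induction is where that entire argument lives and is not supplied. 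So the odd half of the theorem is recoverable from this paper's Theorem~\ref{main} as you propose, but the even half has two unfilled, essential gaps.
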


Note that if $A=R$ is a local ring, $x$ is a non-zero divisor, and $|X|=1$, then Theorem~\ref{thm20200605a}(a) recovers Theorem~\ref{thm20210821a'} because in this case $B$ is the Koszul complex over the element $x$ which is quasiisomorphic, as a DG $R$-algebra, to $R/xR$.

In general, we are interested in the lifting property of DG modules along free extensions of DG algebras that are obtained by adjoining more than one variable to another DG algebra. Our study of lifting property along such extensions is motivated by some of the major problems in commutative algebra including the Auslander-Reiten Conjecture; see~\cite[$\S 7$]{NOY2} for more information. As we see in Theorem~\ref{thm20200605a}, weak liftablity along $A\to A\langle X\rangle$ depends on the parity of the degree of $X$. Hence, it is not a suitable version of liftability when one wishes to generalize the above result in an inductive process to the case where $B=A\langle X_i\mid i\in \mathbb{N}\rangle$. To avoid keeping track of the parity of the degrees of the variables in each inductive step, the notion of na\"{\i}ve liftability was introduced by the authors in~\cite{NOY, NOY2}. This notion is independent of the parity of the degrees of variables and at the same time, it detects (weak) liftability along simple extensions of DG algebras. Therefore, Theorem~\ref{thm20200605a} is restated as follows.

\begin{thm}[\protect{\cite[Theorem 6.8]{NOY}}]\label{cor20210822a}
Assume $B=A\langle X\rangle$ is a simple free extension of a DG $R$-algebra $A$ obtained by adjunction of a variable $X$ to kill a cycle in $A$. Let $N$ be a bounded below semifree DG $B$-module. Then there is an obstruction class $[\Delta_N]$ in $\Ext^{|X|+1}_{B}(N,N)$ such that $N$ is na\"{\i}vely liftable to $A$ if and only if
$[\Delta_N]=0$.
\end{thm}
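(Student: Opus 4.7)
The plan is to reduce Theorem~\ref{cor20210822a} to the two parts of Theorem~\ref{thm20200605a}, using the same obstruction class $[\Delta_N]$; concretely, it suffices to show that for the simple extension $B=A\langle X\rangle$, naïve liftability of a bounded-below semifree DG $B$-module coincides with weak liftability when $|X|$ is odd and with liftability when $|X|$ is even.

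First, I would establish the direction (weak) liftability $\Rightarrow$ naïve liftability. If $N\cong M\otimes_A B$ for some semifree DG $A$-module $M$, then the map $\pi_N$ is identified with $\operatorname{id}_M\otimes\mu$, where $\mu\colon B\otimes_A B\to B$ is the multiplication; the map $m\otimes b\mapsto m\otimes 1\otimes b$ furnishes a DG $B$-linear section, so $N$ is naïvely liftable. Moreover, naïve liftability is stable under taking direct summands: if $M=N\oplus N'$ and $\sigma$ splits $\pi_M$, then the composite $N\hookrightarrow M\xrightarrow{\sigma}M|_A\otimes_A B\twoheadrightarrow N|_A\otimes_A B$ is a DG $B$-linear section of $\pi_N$. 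Combining these observations, every weakly liftable $N$ is naïvely liftable, and in either parity of $|X|$ the vanishing $[\Delta_N]=0$ implies naïve liftability via Theorem~\ref{thm20200605a}.

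For the converse, the key observation is that $N|_A$ is itself semifree over $A$: a $B$-semibasis of $N$, combined with the standard $A$-semibasis of $B=A\langle X\rangle$ (namely $\{1,X\}$ for $|X|$ odd, or the divided powers $\{X^{(n)}\}_{n\geq 0}$ for $|X|$ even), provides an $A$-semibasis of $N|_A$, and boundedness below is preserved. Consequently $N|_A\otimes_A B$ is liftable by construction, and if $N$ is naïvely liftable then $N$ is a direct summand of this liftable DG $B$-module. When $|X|$ is odd, this is already weak liftability, so $[\Delta_N]=0$ by Theorem~\ref{thm20200605a}(a). When $|X|$ is even and $N$ is bounded below, $N|_A\otimes_A B$ is also bounded below, and applying Theorem~\ref{thm20200605a}(b) to $N|_A\otimes_A B$ yields $[\Delta_{N|_A\otimes_A B}]=0$; additivity of the obstruction on direct sums then forces $[\Delta_N]=0$.

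The principal technical hurdle is verifying that $\Delta$, as constructed in Theorem~\ref{thm20200605a}, behaves well with respect to direct sums and—in the even case—with respect to the restriction-induction $N\mapsto N|_A\otimes_A B$, so that vanishing transfers between $N$ and $N|_A\otimes_A B$ at the level of cohomology classes. Once these compatibilities are checked at the cocycle level, the equivalence stated in Theorem~\ref{cor20210822a} follows uniformly in the parity of $|X|$ from Theorem~\ref{thm20200605a}, with the bounded-below hypothesis used only to bring the even case of Theorem~\ref{thm20200605a} into play.
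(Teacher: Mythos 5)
Your overall strategy---reduce Theorem~\ref{cor20210822a} to Theorem~\ref{thm20200605a} by showing that na\"{\i}ve liftability coincides with weak liftability for $|X|$ odd and with liftability for $|X|$ even and $N$ bounded below---is exactly how the paper frames this result: it is quoted from \cite[Theorem 6.8]{NOY} with no in-text proof, presented as a ``restatement'' of Theorem~\ref{thm20200605a} via precisely those equivalences. Your forward direction is sound: the section $m\otimes b\mapsto (m\otimes 1)\otimes b$ of $\pi_{M\otimes_AB}$ shows base-changed semifree modules are na\"{\i}vely liftable, and your summand-stability argument for $\pi_N$ is correct, so $[\Delta_N]=0$ implies na\"{\i}ve liftability in both parities. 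The first genuine gap is in the converse for $|X|$ odd: you assert that $N$ being a direct summand of the liftable DG $B$-module $N|_A\otimes_AB$ ``is already weak liftability.'' It is not, under the paper's definition: weak liftability requires that $N\oplus N(-a_1)\oplus\cdots\oplus N(-a_r)$---a finite direct sum of shifted copies of $N$ \emph{itself}---be liftable, and being a summand of an arbitrary liftable DG module is \emph{a priori} weaker. What closes the odd case is the structure of the diagonal ideal, which you never invoke: for $|X|$ odd one has $B^e\cong B\langle X'\rangle$ with $X'=\delta(X)$, $\partial^J(X')=\delta(d^BX)=\delta(x)=0$ and $(X')^2=0$, so $J=X'B\cong \shift^{|X|}B$ and hence $N\otimes_BJ\cong \shift^{|X|}N$ as right DG $B$-modules; the splitting of~\eqref{basic SES} furnished by na\"{\i}ve liftability then reads $N|_A\otimes_AB\cong N\oplus\shift^{|X|}N$, which is weak liftability on the nose. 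Without this identification your odd case does not close.

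In the even case your converse rests entirely on block-diagonal additivity of the obstruction, $[\Delta_{N\oplus K}]=\bigl([\Delta_N],[\Delta_K]\bigr)$ with $K=N\otimes_BJ$, which you name as the ``principal technical hurdle'' but never verify; since the whole implication hangs on it, the proof as written is incomplete at its load-bearing step. Filling it requires three checks: that $K$ is semifree over $B$ (true here, since for even $|X|$ one has $J^\natural=\bigoplus_{n\geq 1}X'^{(n)}B^\natural$ with $\partial^J(X')=0$, so $\Delta_K$ and the block decomposition make sense at all); that computing $\Delta_{N\oplus K}$ with the union of semifree bases yields a block-diagonal cocycle, so that vanishing of the total class in $\Ext^{|X|+1}_B(N\oplus K,N\oplus K)$ kills its $(N,N)$-component; and that the class is independent of the chosen basis, which is part of the content of \cite{OY} you are invoking. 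None of these is deep, but they are where the substance of the reduction lies, and the compatibility ``with respect to restriction-induction'' you mention is a red herring---only additivity is needed. Note finally that the present paper's own machinery offers a cleaner route: Theorems~\ref{prop20210802c} and~\ref{main} produce a parity-free obstruction in $\Ext^1_B(N,N\otimes_BJ)$, and for odd $|X|$ the identification $N\otimes_BJ\cong\shift^{|X|}N$ converts it directly into the class in $\Ext^{|X|+1}_B(N,N)$; it is only the even case, where $J$ has the infinite basis $\{X'^{(n)}\}_{n\geq 1}$, that genuinely needs the bounded-below hypothesis and the $j$-operator theory of \cite{NOY} to compress the obstruction into $\Ext^{|X|+1}_B(N,N)$.
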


Later, the authors proved the following \emph{existence} result along more general extensions of DG algebras using the notion of diagonal ideal that they introduced in~\cite{NOY2}.

\begin{thm}[\protect{\cite[Proposition 5.3 and Theorem 5.8]{NOY2}}]\label{cor20220321a}
Assume $A\to B$ is an extension of DG $R$-algebras, where $B$ is projective as an underlying graded $A$-module (e.g., $B$ is a free extension of the DG $R$-algebra $A$ obtained by adjoining countably many variables to $A$). For a semifree DG $B$-module $N$, there is an obstruction class $[\Delta_N]$ in $\Ext^{1}_B(N,N\otimes_B J)$ such that $N$ is na\"{i}vely liftable to $A$ if and only if $[\Delta_N]=0$. (Here, $J$ is the diagonal ideal; see Definition~\ref{para20210801a}.)
\end{thm}

The obstruction classes $[\Delta_N]$ discussed in Theorems~\ref{thm20210821a'}, \ref{thm20200605a}, and~\ref{cor20210822a} have been \emph{explicitly described}  in~\cite{auslander:lawlom}, \cite{nassehyoshino}, and~\cite{OY}.
Our goal in this paper is to \emph{explicitly describe} the obstruction to na\"{\i}ve liftability discussed in Theorem~\ref{cor20220321a} as well. Our main result in this paper is the following; see Theorems~\ref{prop20210802c} and~\ref{main}.

\begin{thm}\label{thm20210822a}
Let $A\to B$ be an extension of DG $R$-algebras, where $B$ is projective as an underlying graded $A$-module, and let $N$ be a semifree DG $B$-module with a semifree basis $\mathcal{B}=\{e_{\lambda}\}_{\lambda\in \Lambda}$. For $e_{\lambda}\in\mathcal B$, let $\partial^N(e_{\lambda})=\sum_{\mu < \lambda} e_{\mu}b_{\mu\lambda}$ as a finite sum with $b_{\mu\lambda}\in B^{\natural}$. Then
a right DG $B$-module homomorphism $\Delta_N\colon N\to N\otimes_B J$ of degree $-1$ which defines the obstruction class $[\Delta_N]$ of na\"{i}ve liftability is explicitly given by the formula
$$
\Delta_N(e_{\lambda})=\sum_{\mu < \lambda}e_{\mu}\otimes \delta(b_{\mu\lambda})
$$
where $\delta$ is the universal derivation; see Definition~\ref{para20210317c}.
\end{thm}

The organization of this paper is as follows. Section~\ref{sec20210306b} is devoted to the terminology
and basic definitions which are used in subsequent sections. In Section~\ref{sec20210318a} we give an explicit description of the obstruction class whose vanishing detects na\"{\i}ve liftability. The proof of Theorem~\ref{thm20210822a} is given in this section. In Section~\ref{sec20210822a} we provide another description of the obstruction class that is equivalent to the one from Section~\ref{sec20210318a}. This description is based on the notion of ``connections'' that was originally defined by Connes~\cite[II. \S 2]{AC} in non-commutative differential geometry. Finally, our main result in Section~\ref{sec20220306a} is Theorem~\ref{20210626b} which enables us to construct concrete examples of DG modules that do and do not satisfy na\"{\i}ve liftability.

\section{Terminology and preliminaries}\label{sec20210306b}

The main objects considered in this paper are DG algebras and DG modules; references on these subjects include~\cite{avramov:ifr,avramov:dgha, felix:rht, GL}. In this section, we fix our notation and specify some terminology that will be used throughout the paper. For the unspecified notation, we refer the reader to~\cite{NOY2}.

\begin{para}\label{para20200329a}
A \emph{strongly commutative differential graded $R$-algebra} (or simply \emph{DG $R$-algebra}) $A=(A^{\natural},d^A)$ is a non-negatively graded $R$-algebra $A^{\natural}  = \bigoplus  _{n \geq 0} A _n$ such that
\begin{enumerate}[\rm(a)]
\item
$d^A\colon A^{\natural}\to A^{\natural}$ is a graded $R$-linear map  of degree $-1$ with $(d^A)^2=0$, that is, $A=(A^{\natural},d^A)$ is an $R$-complex;
\item
for all homogeneous elements $a, b \in A^{\natural}$ we have $ab = (-1)^{|a| |b|}ba$, and  $a^2 =0$  if the degree of $a$ (denoted $|a|$) is odd; and
\item
$d^A$ satisfies the \emph{Leibniz rule}, that is, for all homogeneous elements $a,b\in A^{\natural}$ we have $d^A(ab) = d^A(a) b + (-1)^{|a|}ad^A(b)$.
\end{enumerate}
\end{para}

\begin{para}\label{para20201112c}
A (right) \emph{DG $A$-module} $M=(M^{\natural},\partial^M)$ is a graded (right) $A^{\natural}$-module $M^{\natural}=\bigoplus_{i\in \mathbb{Z}}M_i$ which is an $R$-complex with a differential $\partial^M$ that satisfies the Leibniz rule, i.e., for all homogeneous elements $a\in A^{\natural}$ and $m\in M^{\natural}$ we have
$\partial^M(ma) = \partial^M(m)\ a + (-1)^{|m|} m\ d^A(a)$.
A \emph{DG submodule} of a DG $A$-module $M$ is a subcomplex that is a DG $A$-module under the operations induced by $M$. A \emph{DG ideal} of $A$ is a right and left DG submodule of $A$.

DG modules considered in this paper are right DG modules, unless otherwise stated. Note that a DG $A$-module $M$ is also a left DG $A$-module with the left $A$-action
$am = (-1)^{|m||a|} ma$,  for all homogeneous elements $a\in A^{\natural}$ and $m \in M^{\natural}$.

A \emph{DG $A$-module homomorphism} between DG
$A$-modules $M, N$ is an $A^{\natural}$-linear map $f\colon M^{\natural}\to N^{\natural}$ that \emph{commutes with differentials}, i.e., $\partial^Nf=f\partial^M$.
We also set
$$
{}^{*}\!\Hom_{A^{\natural}}(M^{\natural},N^{\natural})
= \bigoplus _{n \in \bbz} \grHom_{A^{\natural}}(M^{\natural},N^{\natural}(n))
$$
where $\grHom_{A^{\natural}}(M^{\natural},N^{\natural}(n))$ is the set of graded $A^{\natural}$-module homomorphisms from $M^{\natural}$ to $N^{\natural}$ of degree $n$. We denote $\grHom_{A^{\natural}}(M^{\natural}, N^{\natural}(0))$ by $\grHom_{A^{\natural}}(M^{\natural}, N^{\natural})$. Note that ${}^{*}\!\Hom_{A^{\natural}}(M^{\natural},N^{\natural})$ is a graded $A^{\natural}$-module. Defining the differential $\partial^{*}$ by
$$(\partial^{*} f) (x) = \partial ^{N} (f(x)) -(-1)^{|f|}f(\partial ^{M}(x))$$
for $f\in {}^{*}\!\Hom_{A^{\natural}}(M^{\natural},N^{\natural})$ and $x\in M^{\natural}$, we see that $\left({}^{*}\!\Hom_{A^{\natural}}(M^{\natural},N^{\natural}),
\partial^{*}\right)$ has a DG $A$-module structure, which we denote by $\Hom_A(M,N)$.

For a DG $A$-module $M$ and an integer $i$, the \emph{$i$-th shift} of $M$, denoted $\shift^i M$, is defined by $\left(\shift^i M\right)_j = M_{j-i}$ with $\partial_j^{\shift^i M}=(-1)^i\partial_{j-i}^M$. We set $\shift M=\shift^1 M$.

A DG $A$-module $M$ is \emph{semifree} if it has a \emph{semifree basis} $\{e_{\lambda}\}_{\lambda\in\Lambda}$, that is, a subset $\{e_{\lambda}\}_{\lambda\in\Lambda}\subseteq M^{\natural}$ which is a graded $A^{\natural}$-free basis of $M^{\natural}$ indexed by a well-orderd set
$(\Lambda, <)$ such that $\partial^M(e_\lambda)=\sum_{\mu<\lambda} e_\mu A^{\natural}$ for all $e_\lambda$.
See~\cite[8.2]{avramov:dgha} for more details.

For an integer $i$ and DG $A$-modules $M,N$, where $M$ is semifree, $\Ext^i_A(M,N)$ is defined to be $\HH_{-i}\left(\Hom_A(M,N)\right)$.
\end{para}

\begin{para}\label{para20210423b}
Let $\C(A)$ denote the abelian category of DG $A$-modules and DG $A$-module homomorphisms.
Let $\K(A)$ be the homotopy category of DG $A$-modules; objects of $\K(A)$ are DG $A$-modules and morphisms are the set of homotopy equivalence classes of DG $A$-module homomorphisms, i.e., $\Hom _{\K(A)} (M, N) = \Hom _{A} (M, N)/ \sim$,
where $f \sim g$ for $f,g\in \Hom _{A} (M, N)$ if and only if there is a graded $A^{\natural}$-module homomorphism $h\colon M^{\natural} \to N^{\natural}$ of degree $-1$ such that $f - g = \partial ^N h + h \partial ^M$. The category $\K(A)$ is a triangulated category
and $\Ext^i_A(M,N)=   \Hom _{\K(A)}(M, \shift^i N)$.
\end{para}

\begin{para}\label{para20201114e}
Assume that $A, B$ are DG $R$-algebras such that $B^{\natural}$ is projective as a graded $A^{\natural}$-module. Let $B^o$ be the opposite DG $R$-algebra and $B^e=B^o \otimes_A B$ be the enveloping DG $R$-algebra of $B$ over $A$. The algebra structure on $B^e$ is given by
\begin{equation}\label{eq20210317d}
 (b_1^o \otimes b_2)( {b'}_1^o \otimes {b'}_2)
=  (-1)^{|{b'}_1|\left(|b_1|+|b_2|\right)}({b'}_1 b_1)^o\otimes b_2{b'}_2
\end{equation}
and its differential structure is described as
\begin{equation}\label{eq20210318a}
d^{B^e} (  b_1^o \otimes b_2 ) = d^{B^o} (  b_1^o) \otimes b_2 + (-1)^{|b_1|} b_1^o \otimes d^{B}(b_2 )
\end{equation}
for all homogeneous elements $b_1, b_2, b'_1, b'_2\in B^{\natural}$.

Note that DG $B^e$-modules are precisely DG $(B, B)$-bimodules. In fact, for a DG $B^e$-module $N$, the right action of an element of $(B^e)^{\natural}$ on $N^{\natural}$ yields the two-sided $B^{\natural}$-module structure
\begin{equation}\label{eq20210317a}
nb=n(1 ^o \otimes b)\qquad\text{and}\qquad bn=(-1)^{|b||n|}n(b^o\otimes 1)
\end{equation}
for all homogeneous elements $n \in N^{\natural}$ and $b \in B^{\natural}$. In particular, if $N=B^e$ and $n=b_1^o\otimes b_2$ for $b_1, b_2\in B^{\natural}$, then by~\eqref{eq20210317d} we have
\begin{equation}\label{eq20210523a}
(b_1 ^o \otimes b_2)b=b_1 ^o \otimes b_2b\qquad\text{and}\qquad b(b_1 ^o \otimes b_2)=(bb_1)^o\otimes b_2.
\end{equation}
\end{para}

\begin{para}\label{para20210523a}
Assume that $A, B$ are DG $R$-algebras such that $B^{\natural}$ is projective as a graded $A^{\natural}$-module. For a DG $B^e$-module $N$, a graded $A^{\natural}$-linear map $D\colon B^{\natural}\to N^{\natural}$ is called an \emph{$A$-derivation of $N$} if the following conditions are satisfied for all homogeneous elements $n \in N^{\natural}$ and $b_1, b_2 \in B^{\natural}$:
\begin{gather}
D(b_1b_2)=D(b_1)b_2+(-1)^{|D||b_1|}b_1D(b_2)\label{eq20210317b}\\
D(a)=0\  \text{for all $a\in A$}.\notag
\end{gather}
Note that by~\eqref{eq20210317a}, equality~\eqref{eq20210317b} is equivalent to
\begin{equation}\label{eq20210317c}
D(b_1b_2)=D(b_1)\left(1^o\otimes b_2\right)+(-1)^{|b_1||b_2|}D(b_2)\left(b_1^o\otimes 1\right).
\end{equation}
We denote the set of $A$-derivations of the DG $B^e$-module $N$ by $\Der_A(B,N)$.
\end{para}

\section{Explicit description of the obstruction class}\label{sec20210318a}

Our main results in this section are Theorems~\ref{prop20210802c} and~\ref{main} in which we explicitly describe the obstruction class to na\"{\i}ve liftability of DG modules along DG algebra extensions. The notation used in this section comes from Section~\ref{sec20210306b}. We also make the following convention for the rest of the paper.

\begin{convention}\label{para20210317a}
Throughout the paper, $A, B$ are DG $R$-algebras such that $B^{\natural}$ is projective as a graded $A^{\natural}$-module. Also, $\varphi\colon A\to B$ is a \emph{DG $R$-algebra homomorphism}, that is, $\varphi$ is a graded $R$-algebra homomorphism of degree $0$ with $d^B\varphi=\varphi d^A$.
\end{convention}

The description of the obstruction class to na\"{\i}ve liftability of DG modules is based on the notion of diagonal ideal that we define next.

\begin{defn}\label{para20210801a}
Let $\pi\colon B^e \to B$ denote the map defined by $\pi(b_1 ^o \otimes b_2) = b_1b_2$, for all $b_1, b_2\in B^{\natural}$. Note that $\pi$ is a homomorphism of DG $R$-algebras; see~\cite[3.1]{NOY2}. Also,
$J:=\ker \pi$ is a DG ideal of $B^e$ and following~\cite{NOY2}, it is called the \emph{diagonal ideal} of $\varphi$.
Moreover, the isomorphism $B^e /J \cong B$ of DG $R$-algebras is also an isomorphism of DG $B^e$-modules. Hence, there is an exact sequence
\begin{equation}\label{eq20201114a}
0 \to J \xra{\iota}  B^e \xra{\pi} B  \to 0
\end{equation}
of DG $B^e$-modules in which $\iota$ is the natural inclusion.
\end{defn}

\begin{prop}\label{prop20210318b}
The short exact sequence~\eqref{eq20201114a} is a splitting sequence of DG $B$-modules. More precisely, the maps $\rho\colon B\to B^e$ and $\sigma\colon B^e\to J$  defined by
\begin{align*}
\rho(b)=1^o\otimes b&\qquad\qquad\text{and}\qquad\qquad
\sigma(b_1^o\otimes b_2)=b_1^o\otimes b_2-1^o\otimes b_1b_2
\end{align*}
for all $b, b_1, b_2\in B^{\natural}$, are DG $B$-module homomorphisms that satisfy the equalities
\begin{equation}\label{eq20210802a}
\pi\rho=\id_{B},\qquad\qquad\sigma\iota=\id_{J},\qquad\qquad\iota\sigma+\rho\pi=\id_{B^e}.
\end{equation}
\end{prop}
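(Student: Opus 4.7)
The plan is to verify each assertion by direct computation, and the proposition really breaks into four verifications: (i) $\rho$ is a DG $B$-module homomorphism; (ii) $\sigma$ takes values in $J$ and is a DG $B$-module homomorphism; (iii) $\pi\rho = \id_B$ and $\sigma\iota = \id_J$; and (iv) $\iota\sigma + \rho\pi = \id_{B^e}$. The underlying $B$-module structure on $B^e$ in play is the right action from~\eqref{eq20210317a}, namely $(b_1^o\otimes b_2)b = b_1^o \otimes b_2 b$.

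First, I would check $\rho$. Right $B$-linearity is immediate: $\rho(b)\cdot b' = (1^o\otimes b)\cdot b' = 1^o\otimes bb' = \rho(bb')$. For compatibility with differentials, I apply~\eqref{eq20210318a} to get $d^{B^e}(1^o\otimes b) = d^{B^o}(1^o)\otimes b + 1^o\otimes d^B(b) = 1^o\otimes d^B(b) = \rho(d^B(b))$, using $d^{B^o}(1^o)=0$.

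Second, for $\sigma$ I would first check the image lies in $J$: $\pi\sigma(b_1^o\otimes b_2) = b_1 b_2 - b_1 b_2 = 0$, so $\sigma$ does factor through $J$. Right $B$-linearity follows from a one-line computation using~\eqref{eq20210523a}: both $\sigma(b_1^o\otimes b_2)\cdot b$ and $\sigma((b_1^o\otimes b_2)\cdot b)$ equal $b_1^o\otimes b_2 b - 1^o\otimes b_1 b_2 b$. For the differential, I would expand each side using~\eqref{eq20210318a} and the Leibniz rule $d^B(b_1 b_2) = d^B(b_1)b_2 + (-1)^{|b_1|}b_1 d^B(b_2)$. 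Both $d^{B^e}\sigma(b_1^o\otimes b_2)$ and $\sigma\, d^{B^e}(b_1^o\otimes b_2)$ come out to
\[
(d^B(b_1))^o\otimes b_2 - 1^o\otimes d^B(b_1)b_2 + (-1)^{|b_1|}\bigl(b_1^o\otimes d^B(b_2) - 1^o\otimes b_1 d^B(b_2)\bigr),
\]
so they agree.

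Finally, the three identities in~\eqref{eq20210802a} are formal. The equality $\pi\rho = \id_B$ is $\pi(1^o\otimes b) = b$. For $\sigma\iota = \id_J$, if $j\in J$, then $\pi(j)=0$, so writing $j = \sum b_1^o\otimes b_2$ (a finite sum with $\sum b_1 b_2 = 0$) gives $\sigma(j) = j - 1^o\otimes \pi(j) = j$. For the last identity, $(\iota\sigma + \rho\pi)(b_1^o\otimes b_2) = (b_1^o\otimes b_2 - 1^o\otimes b_1 b_2) + 1^o\otimes b_1 b_2 = b_1^o\otimes b_2$.

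No step is genuinely hard; the only place demanding care is the verification that $\sigma$ commutes with the differential, since one has to track signs from~\eqref{eq20210318a} and combine them with the Leibniz rule applied to $d^B(b_1 b_2)$. Everything else is linearity bookkeeping, so I would present the differential check in full and leave the $B$-linearity and the three identities as short one-line computations.
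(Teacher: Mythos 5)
Your proposal is correct and follows essentially the same route as the paper's proof: a direct element-wise verification that $\sigma$ lands in $J=\ker\pi$, that $\rho$ and $\sigma$ commute with the differentials via~\eqref{eq20210318a} and the Leibniz rule, and that the three identities in~\eqref{eq20210802a} hold by computation on tensors (your check of $\sigma\iota=\id_J$ via $\sigma(j)=j-1^o\otimes\pi(j)$ is a slightly slicker packaging of the paper's finite-sum argument, but the same idea). The only cosmetic difference is that you verify right $B^{\natural}$-linearity of $\rho$ and $\sigma$ explicitly, whereas the paper absorbs this into the observation that~\eqref{eq20201114a} splits as graded $B^{\natural}$-modules.
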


\begin{proof}
For all $b_1, b_2\in B^{\natural}$ we have
$\pi\sigma(b_1^o\otimes b_2)=\pi\left(b_1^o\otimes b_2-1^o\otimes b_1b_2\right)=b_1b_2-b_1b_2=0$.
Hence, $\im(\sigma)\subseteq J=\ker \pi$, that is, the map $\sigma$ is well-defined.

Note that the short exact sequence~\eqref{eq20201114a} is a splitting sequence of graded $B^{\natural}$-modules and the $B^{\natural}$-module homomorphism $\rho$ satisfies the equality $\pi\rho=\id_{B}$. Moreover, it follows from~\eqref{eq20210318a} that $\rho$ commutes with differentials. Hence, $\rho$ is a DG $B$-module homomorphism.

It is straightforward to check the equality $\iota\sigma+\rho\pi=\id_{B^e}$. To prove the equality $\sigma\iota=\id_{J}$, note that the elements of $J^{\natural}$ are finite sums of the form $\sum_{i=1}^n b_i^o\otimes b'_i$ with homogeneous elements $b_i,b'_i\in B^{\natural}$ such that $\sum_{i=1}^nb_ib'_i=\sum_{i=1}^n \pi(b_i^o\otimes b'_i)=\pi\left(\sum_{i=1}^n b_i^o\otimes b'_i\right)=0$. For such an element in $J^{\natural}$ we have
$$
\sigma \iota\left(\sum_{i=1}^n b_i^o\otimes b'_i\right)=\sum_{i=1}^n (b_i^o\otimes b'_i-1^o\otimes b_ib'_i)=\sum_{i=1}^n b_i^o\otimes b'_i.
$$

Also, $\sigma$ commutes with the differentials. In fact, for all homogeneous elements $b_1,b_2\in B^{\natural}$ we have the equalities
\begin{align*}
\partial^{J}\sigma(b_1^o\otimes b_2)&=\partial^{J}(b_1^o\otimes b_2-1^o\otimes b_1b_2)\\
&=d^{B^o} (b_1^o)\otimes b_2+(-1)^{|b_1|}b_1^o\otimes d^B(b_2)-1^o\otimes d^B(b_1b_2)\\
&=d^{B^o} (b_1^o)\otimes b_2-1^o\otimes d^B(b_1)b_2+(-1)^{|b_1|}\left(b_1^o\otimes d^B(b_2)-1^o\otimes b_1d^B(b_2)\right)\\
&=\sigma(d^{B^o} (  b_1^o) \otimes b_2 + (-1)^{|b_1|} b_1^o \otimes d^{B}(b_2))\\
&=\sigma d^{B^e}(b_1^o\otimes b_2).
\end{align*}
This computation implies that $\sigma\colon B^e\to J$ is a DG $B$-module homomorphism.
\end{proof}




\begin{disc}\label{para20210319a}
Let $N$ be a semifree DG $B$-module. Applying the functor $N\otimes_B -$ to the short exact sequence~\eqref{eq20201114a} and using the natural isomorphism $\nu\colon N\otimes_BB\to N$ of DG $B$-modules defined by $\nu(n\otimes b)=nb$ for all $n\in N^{\natural}$ and $b\in B^{\natural}$, we obtain the short exact sequence at the top row of the commutative diagram
\begin{equation}\label{eq20210319a}
\xymatrix{
0\ar[r]&N\otimes_B J\ar[r]^{\iota_N}&N\otimes_B B^e\ar[rrr]^{\nu(\id_N\otimes \pi)}\ar[d]_{\cong}^{\ell}&&&N\ar[r]\ar@{=}[d]&0\\
&&N|_A\otimes_AB\ar[rrr]^{\pi_N}&&&N&
}
\end{equation}
in which $\iota_N=\id_N\otimes \iota$, the DG $B$-module $N$ regarded as a DG $A$-module via the DG $R$-algebra homomorphism $\varphi$ is denoted by $N |_A$, the map $\pi_N$ is the DG $B$-module epimorphism
defined by $\pi_N(n \otimes b)=nb$, and the isomorphism $\ell$ is the composition of the isomorphisms $N\otimes_B B^e= N\otimes_B(B^o\otimes_AB)\cong (N\otimes_BB^o)\otimes_AB\cong N|_A\otimes_AB$ which is described by the formula $\ell(n\otimes(b_1^o\otimes b_2))= nb_1\otimes b_2$ for all $n\in N^{\natural}$ and $b_1, b_2\in B^{\natural}$.
Using the commutative diagram~\eqref{eq20210319a}, we identify the map $\nu(\id_N\otimes \pi)$ by $\pi_N$, i.e., we assume $\pi_N=\nu(\id_N\otimes \pi)$ for the rest of the paper and hence, the top row of~\eqref{eq20210319a} is the short exact sequence
\begin{equation}\label{eq20210802b}
0 \to N \otimes _BJ \xra{\iota_N} N\otimes_B B^e \xra{\pi_N} N  \to 0
\end{equation}
of DG $B$-modules; see~\cite[Proposition 5.3]{NOY2} for more details.
Note that $N$  is na\"ively liftable to $A$ if and only if ~\eqref{eq20210802b} splits. In particular, if $N$ is na\"ively liftable to $A$, then it is a direct summand of the DG $B$-module $N |_A \otimes _A B$, which is liftable to $A$.
\end{disc}

\begin{prop}\label{prop20210802a}
Let $N$ be a semifree DG $B$-module with a semifree basis $\mathcal{B}=\{e_{\lambda}\}$. Then~\eqref{eq20210802b} is a splitting sequence of graded $B^{\natural}$-modules. More precisely, the maps $\rho_N\colon N^{\natural} \to (N\otimes_BB^e)^{\natural}$ and $\sigma_N\colon (N\otimes_BB^e)^{\natural}\to (N\otimes_BJ)^{\natural}$ defined by
\begin{align*}
\rho_N(e_{\lambda} b)=e_{\lambda}\otimes \rho(b)&\qquad\qquad\text{and}\qquad\qquad
\sigma_N(e_{\lambda}\otimes (b_1^o\otimes b_2))=e_{\lambda}\otimes\sigma(b_1^o\otimes b_2)
\end{align*}
for all $e_{\lambda}\in \mathcal B$ and all homogeneous elements $b,b_1,b_2\in B^{\natural}$, are right $B^{\natural}$-linear homomorphisms for which the following equalities hold: \begin{equation}\label{eq20210524a}
\pi_N\rho_N=\id_{N},\qquad \sigma_N\iota_N=\id_{N\otimes_B J},\qquad \iota_N\sigma_N+\rho_N\pi_N=\id_{N\otimes_BB^e}.
\end{equation}
\end{prop}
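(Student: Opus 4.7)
The plan is to exploit the freeness of $N^\natural$ as a graded $B^\natural$-module on the semifree basis $\mathcal{B}=\{e_\lambda\}$. This gives a canonical decomposition $(N \otimes_B B^e)^\natural \cong \bigoplus_\lambda e_\lambda \otimes (B^e)^\natural$, and similar decompositions for $(N\otimes_B J)^\natural$ and $N^\natural$, so that $\sigma_N$ and $\rho_N$ can be defined fiberwise by applying $\sigma$ and $\rho$ respectively on the second tensor factor. The image of $\sigma_N$ lands in $(N \otimes_B J)^\natural$ because $\sigma$ takes values in $J$ by Proposition~\ref{prop20210318b}. For $\rho_N$, the prescribed formula $\rho_N(e_\lambda b) = e_\lambda \otimes (1^o \otimes b)$ is simply the right $B^\natural$-linear extension of the rule $e_\lambda \mapsto e_\lambda \otimes (1^o \otimes 1)$, using the identity $(1^o \otimes 1)b = 1^o \otimes b$ from \eqref{eq20210523a}; a parallel observation applies to $\sigma_N$ and shows it is independent of how an element is written as a sum $\sum_\lambda e_\lambda \otimes (b_1^o \otimes b_2)$.

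Right $B^\natural$-linearity of both maps then follows from the right $B^\natural$-linearity of $\rho$ and $\sigma$ (which are DG $B$-module homomorphisms by Proposition~\ref{prop20210318b}) combined with the fact that the right $B^\natural$-action on $N \otimes_B B^e$ is carried by the $B^e$-factor. The three splitting identities in \eqref{eq20210524a} then reduce, fiber by fiber in each summand $e_\lambda \otimes -$, to the corresponding identities $\pi\rho=\id_B$, $\sigma\iota=\id_J$, and $\iota\sigma+\rho\pi=\id_{B^e}$ of Proposition~\ref{prop20210318b}; each is a short direct verification once the definitions are unwound.

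The final assertion---that $\rho_N$ and $\sigma_N$ need not be DG homomorphisms---is the conceptual heart of what follows, since this defect is precisely what produces the obstruction class studied in the rest of the paper. The key computation for $\rho_N$ is $\partial^{N\otimes_B B^e}(\rho_N(e_\lambda)) = \partial^N(e_\lambda) \otimes (1^o \otimes 1)$. If $\partial^N(e_\lambda) = \sum_\mu e_\mu c_{\mu\lambda}$ is the semifree expansion in the basis, then pulling the coefficients $c_{\mu\lambda}$ across the tensor product via \eqref{eq20210523a} yields $\sum_\mu e_\mu \otimes (c_{\mu\lambda}^o \otimes 1)$, whereas $\rho_N(\partial^N(e_\lambda)) = \sum_\mu e_\mu \otimes (1^o \otimes c_{\mu\lambda})$. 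Their difference lies in $(N \otimes_B J)^\natural$ and is generically nonzero, so $\rho_N$ fails to commute with differentials in general; the analogous calculation handles $\sigma_N$. The only genuine obstacle along the way is bookkeeping: verifying that the fiberwise construction respects the $B^\natural$-tensor relations, which is automatic from the freeness of $N^\natural$ on $\{e_\lambda\}$ but must be invoked at each step.
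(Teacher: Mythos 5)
Your proposal is correct and takes essentially the same route as the paper: the splitting identities \eqref{eq20210524a} are checked by reduction (fiberwise over the basis, using the $B^{\natural}$-freeness of $N^{\natural}$) to the identities of Proposition~\ref{prop20210318b}, which is exactly the verification the paper labels straightforward, and your key computation contrasting $\partial^{N\otimes_BB^e}\rho_N(e_{\lambda})=\sum_{\mu}e_{\mu}\otimes(b_{\mu\lambda}^o\otimes 1)$ with $\rho_N\partial^N(e_{\lambda})=\sum_{\mu}e_{\mu}\otimes(1^o\otimes b_{\mu\lambda})$ via \eqref{eq20210523a} is verbatim the paper's argument that $\rho_N$ fails to commute with differentials. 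Your extra observation that the discrepancy $\sum_{\mu}e_{\mu}\otimes(b_{\mu\lambda}^o\otimes 1-1^o\otimes b_{\mu\lambda})$ lies in $(N\otimes_BJ)^{\natural}$ correctly anticipates the obstruction class $\Delta_N$ of Theorem~\ref{prop20210802c}, though it is not needed for this proposition.
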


\begin{proof}
For all $e_\lambda\in \mathcal{B}$ and $b\in B^{\natural}$ we see that
\begin{gather}
\pi_N\rho_N(e_\lambda)=\pi_N(e_\lambda \otimes \rho(1))=e_\lambda\pi\rho(1)=e_\lambda=\id_N(e_\lambda)\notag\\
\left(\iota_N\sigma_N+\rho_N\pi_N\right)(e_\lambda \otimes (b^o \otimes 1))
=e_\lambda\otimes\sigma(b^o\otimes 1)+e_\lambda\otimes \rho(b)=e_\lambda \otimes (b^o \otimes 1).\notag
\end{gather}
Therefore, the first and third equalities of~\eqref{eq20210524a} hold since $\pi_N,\,\rho_N,\,\sigma_N$, and $\iota_N$ are $B^{\natural}$-linear.
Now, let $e_\lambda\in\mathcal{B}$ and $\sum_i b_i\otimes b'_i\in J$.
Then we have
\begin{align*}
\sigma_N\iota_N\left(e_\lambda\otimes\sum_i b_i\otimes b'_i\right)
=e_\lambda\otimes \sigma\iota\left(\sum_i b_i\otimes b'_i\right)
=e_\lambda\otimes \sum_i b_i\otimes b'_i.
\end{align*}
Hence, the second equality of~\eqref{eq20210524a} holds.
\end{proof}

\begin{disc}
Note that $\rho_N$ and $\sigma_N$ described in Proposition~\ref{prop20210802a} are not DG $B$-module homomorphisms in general.
\end{disc}

Next definition plays an essential role in the rest of the paper.

\begin{defn}\label{para20210317c}
Let $\delta\colon B^{\natural}\to J^{\natural}$ be the $A^{\natural}$-linear map defined by $$\delta(b)=b^o\otimes 1-1^o\otimes b$$ for all $b\in B^{\natural}$. Note that $|\delta|=0$ and $\delta(a)=0$ for all $a\in A^{\natural}$. Also, it follows from~\eqref{eq20210317d} that $\delta$ satisfies~\eqref{eq20210317c}. Hence, $\delta\in \Der_A(B,J)$. We call $\delta$ the \emph{universal derivation}.
Since $\partial^J $ is induced from $d^{B^e}$, the universal derivation $\delta$ commutes with the differentials, i.e.,
\begin{equation}\label{eq20210523s}
\delta d^B=\partial^J \delta.
\end{equation}
\end{defn}

\begin{disc}\label{disc20220321a}
Let $N$ be a semifree DG $B$-module, and let $\Delta_N\colon N^{\natural}\to (N\otimes_BJ)^{\natural}$ be the right $B^{\natural}$-linear graded homomorphism of degree $-1$ defined by
$$\Delta_N:=\sigma_N\partial^{N\otimes_BB^{e}}\rho_N.$$
It follows from chasing the diagram
$$
\xymatrix@C=7mm@R=4mm{
&& N\otimes_B B^e \ar@<1ex>[rr]^-{\pi_N} \ar[dd]_{\partial^{N\otimes_B B^e}}
&& N  \ar@{.>}[ll]^-{\rho_N}  \ar[dd]^-{\partial^N} \\
&&&&\\
N\otimes_B J   \ar@<1ex>[rr]^-{\iota_N} \ar[dd]_{\partial^{N\otimes_B J}}
&& N\otimes_B B^e \ar@<1ex>[rr]^-{\pi_N} \ar[dd]^{\partial^{N\otimes_B B^e}} \ar@{.>}[ll]^-{\sigma_N}
&& N  \ar@{.>}[ll]^-{\rho_N}   \\
&&&&\\
N\otimes_B J   \ar@<1ex>[rr]^-{\iota_N}
&& N\otimes_B B^e  \ar@{.>}[ll]^-{\sigma_N}
&&}
$$
that $\iota_N(\partial^{N\otimes_B J} \Delta_N - \Delta_N \partial^{N})=0$.
Therefore, $\Delta_N$ is a DG $B$-module homomorphism, since $\iota_N$ is injective.
In particular, $\Delta_N$ defines a cohomology class in $\Ext_B^1(N,N\otimes_B J)$ which is denoted by $[\Delta_N]$.
\end{disc}

The next theorem is one of the main results in this section that describes the structure of the obstruction class to na\"{\i}ve liftability of DG modules along the DG algebra extensions satisfying Convention~\ref{para20210317a}. This result  is a major part of Theorem~\ref{thm20210822a} from the introduction.

\begin{thm}\label{prop20210802c}
Let $N$ be a semifree DG $B$-module with a semifree basis $\mathcal{B}=\{e_{\lambda}\}_{\lambda\in \Lambda}$.
 For $e_{\lambda}\in\mathcal B$, if we assume $\partial^N(e_{\lambda})=\sum_{\mu<\lambda} e_{\mu}b_{\mu\lambda}$ as a finite sum with $b_{\mu\lambda}\in B^{\natural}$, then
the DG $B$-module homomorphism $\Delta_N\colon N\to N\otimes_B J$ is explicitly described by the formula
\begin{equation}\label{eq20210523b}
\Delta_N(e_{\lambda})=\sum_{\mu<\lambda}e_{\mu}\otimes \delta(b_{\mu\lambda}).
\end{equation}
\end{thm}
\begin{proof}
For $e_\lambda \in \mathcal{B}$ we have the following equalities:
\begin{equation*}
\begin{split}
\Delta_N(e_\lambda)
&= \sigma_N\partial^{N\otimes_B B^e}\rho_N (e_\lambda)
 =\sigma_N\partial^{N\otimes_B B^e} (e_\lambda \otimes (1^o \otimes 1))\\
 &= \sigma_N\left (\sum_{ \mu <\lambda } e_\mu \otimes (b_{\lambda \mu}^o \otimes 1)\right)
 = \sum_{\mu<\lambda} e_\mu \otimes \delta ( b_{\lambda \mu})
\end{split}
\end{equation*}
as desired.
\end{proof}

\begin{disc}\label{para20210524a}
Consider the notation from Theorem~\ref{prop20210802c}.
Since~\eqref{eq20201114a} is a split short exact sequence of graded $B^{\natural}$-modules, we have the isomorphism $$N^{\natural}\otimes_{B^{\natural}} (B^e)^{\natural}\cong (N^{\natural}\otimes_{B^{\natural}} J^{\natural})\oplus N^{\natural}.$$ By Proposition~\ref{prop20210802a}, the sequence~\eqref{eq20210802b} is equivalent to a short exact sequence
\begin{equation}\label{eq20210524b}
0\to N\otimes_BJ\xra{{\tiny \left(\begin{matrix}\id_{N\otimes_BJ}\\0\end{matrix}\right)}}(N\otimes_BJ)\oplus N\xra{{\tiny \left(\begin{matrix}0&\id_{N}\end{matrix}\right)}}N\to 0
\end{equation}
of DG $B$-modules in which the differential on $(N\otimes_BJ)\oplus N$ is of the form
\begin{equation}\label{eq20210524c}
\partial:=\left(\begin{matrix}
\partial^{N\otimes_BJ} & f_N \\
0 & \partial^N
\end{matrix}\right)
\end{equation}
where $f_N\colon N\to N\otimes_BJ$ is a cycle in $\Hom_B(N,\shift (N\otimes_BJ))$. Hence, $f_N$ represents a cohomology class $[f_N]$ in $\Ext^1_B(N,N\otimes_BJ)$. Note also that
$$f_N=\left(\begin{matrix}\id_{N\otimes_BJ}&0\end{matrix}\right)\partial\left(\begin{matrix}0\\\id_{N}\end{matrix}\right).$$
Therefore, $[\Delta_N]=[f_N]$ in $\Ext^1_B(N,N\otimes_BJ)$.
\end{disc}


The following along with Theorem~\ref{prop20210802c} completes the proof of Theorem~\ref{thm20210822a}.

\begin{thm}\label{main}
Consider the notation from Theorem~\ref{prop20210802c} and Remark~\ref{para20210524a}. The following conditions are equivalent:
\begin{enumerate}[\rm(i)]
\item
$[f_N]=[\Delta_N]=0$ in $\Ext^1_B(N,N\otimes_BJ)$;
\item
$N$ is na\"{\i}vely liftable to $A$.
\end{enumerate}
\end{thm}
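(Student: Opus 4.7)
The plan is to use the matrix presentation of the relevant short exact sequence given in Remark~\ref{para20210524a}, reducing the proof to a standard cocycle/coboundary calculation. By Remark~\ref{para20210319a}, $N$ is na\"{\i}vely liftable to $A$ if and only if the short exact sequence~\eqref{eq20210802b} splits as DG $B$-modules; and by Remark~\ref{para20210524a} this sequence is isomorphic, at the level of DG $B$-modules, to the sequence~\eqref{eq20210524b} equipped with the differential~\eqref{eq20210524c}. So the task is to determine precisely when~\eqref{eq20210524b} admits a DG $B$-module section of the surjection $\begin{pmatrix}0 & \id_N\end{pmatrix}$.

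Any right $B^{\natural}$-linear section $s\colon N^{\natural}\to (N\otimes_BJ)^{\natural}\oplus N^{\natural}$ of degree $0$ has the form $s=\begin{pmatrix}h\\ \id_N\end{pmatrix}$ for some graded $B^{\natural}$-linear map $h\colon N\to N\otimes_BJ$ of degree $0$. First I would carry out the elementary computation, using~\eqref{eq20210524c}, that such an $s$ commutes with the differentials if and only if
\[
f_N \;=\; h\,\partial^N \;-\; \partial^{N\otimes_BJ}\,h.
\]
Next, under the sign conventions in Paragraph~\ref{para20201112c}, the right-hand side is exactly $(\partial^{*}h)(-)$, where $\partial^{*}$ is the differential on $\Hom_B(N,\shift(N\otimes_BJ))$ and $h$ is regarded as an element of degree $0$ there; the sign arises because $\partial^{\shift(N\otimes_BJ)}=-\partial^{N\otimes_BJ}$.

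Combining these two observations, a DG $B$-module section $s$ exists if and only if $f_N$ is a coboundary in $\Hom_B(N,\shift(N\otimes_BJ))$, equivalently $[f_N]=0$ in $\Ext^1_B(N,N\otimes_BJ)$. Since $[\Delta_N]=[f_N]$ by Remark~\ref{para20210524a}, this is precisely condition (i), completing the equivalence. I do not anticipate any real obstacle: the only points demanding care are tracking the shift sign in the identification of $\Ext^1_B(N,N\otimes_BJ)$ with the zeroth cohomology of the shifted Hom complex, and observing that the ansatz $s=(h,\id_N)^T$ is automatically $B^{\natural}$-linear as soon as $h$ is.
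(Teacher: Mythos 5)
Your proposal is correct and takes essentially the same route as the paper: both work with the matrix presentation of Remark~\ref{para20210524a} and reduce the splitting of the sequence to the condition that $f_N$ be a coboundary in $\Hom_B(N,\shift(N\otimes_BJ))$, with your sign bookkeeping ($\partial^{\shift(N\otimes_BJ)}=-\partial^{N\otimes_BJ}$, and replacing $h$ by $-h$ as needed) matching the paper's conventions. The only cosmetic difference is that you parametrize DG sections directly as $s=\left(\begin{smallmatrix}h\\ \id_N\end{smallmatrix}\right)$, whereas the paper encodes the identical computation in the upper-triangular isomorphisms $\Gamma$ and $\Phi$ comparing $\left((N\otimes_BJ)^{\natural}\oplus N^{\natural},\partial\right)$ with the trivial extension; composing the paper's $\Phi^{-1}$ with the canonical section of the split sequence recovers exactly your section with $h=-q$.
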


\begin{proof}
Let $\widetilde{\partial}:=\left(\begin{smallmatrix}\partial^{N\otimes_BJ}&0\\0&\partial^N\end{smallmatrix}\right)$ and recall from~\eqref{eq20210524c} that
$\partial=\left(\begin{smallmatrix}
\partial^{N\otimes_BJ} & f_N \\
0 & \partial^N
\end{smallmatrix}\right)$. When necessary, we use the notations $\left((N\otimes_{B}J)^\natural\oplus N^\natural, \partial\right)$ and $\left((N\otimes_{B}J)^\natural\oplus N^\natural, \widetilde{\partial}\right)$ to specify the DG $B$-module structure on $(N\otimes_{B}J)^\natural\oplus N^\natural$.

Note that
$N$ is naively liftable to $A$ if and only if
there is an isomorphism $$\Phi\colon ((N\otimes_{B}J)^\natural\oplus N^\natural, \partial) \to  ((N\otimes_{B}J)^\natural\oplus N^\natural, \widetilde{\partial})$$
of DG $B$-modules such that the diagram
\begin{equation}
\begin{split}
\xymatrix@C=15mm@R=8mm{
0\ar[r]
&N\otimes_BJ\ar@{=}[d]\ar[r]^-{{\tiny \left(\begin{matrix}\id_{N\otimes_BJ}\\0\end{matrix}\right)}}
&\left((N\otimes_{B}J)^\natural\oplus N^\natural, \partial\right)
\ar[d]_{\Phi}\ar[r]^-{{\tiny \left(\begin{matrix}0&\id_{N}\end{matrix}\right)}}
&N\ar@{=}[d]\ar[r]
&0
\\
0\ar[r]
&N\otimes_BJ\ar[r]^-{{\tiny \left(\begin{matrix}\id_{N\otimes_BJ}\\0\end{matrix}\right)}}
&\left((N\otimes_{B}J)^\natural\oplus N^\natural, \widetilde{\partial}\right)
\ar[r]^-{{\tiny \left(\begin{matrix}0&\id_{N}\end{matrix}\right)}}
&N\ar[r]
&0
}\notag
\end{split}
\end{equation}
commutes.
Commutativity of this diagram implies that $\Phi$ is of the form
$$
 \Phi=\left(\begin{matrix}
\id_{N\otimes_BJ} & q \\
0 & \id_{N}
\end{matrix}\right)
$$
where $q\colon N^\natural \to (N\otimes_BJ)^\natural$ is a graded homomorphism of degree $0$.

(i)$\implies$(ii) Since $[f_N]=0$ in $\Ext^1_B(N,N\otimes_BJ)$, there exists a graded $B^{\natural}$-module homomorphism $g\colon N^{\natural}\to (N\otimes_BJ)^{\natural}$ of degree $0$ such that $f_N=\partial^{N\otimes_BJ}g-g\partial^N$.
Setting $q:=g$ in $\Phi$, we see that $N$ is na\"{i}vely liftable to $A$.

(ii)$\implies$(i) If $N$ is na\"{\i}vely liftable to $A$, then an isomorphism
$$
\Phi=\left(\begin{matrix}
\id_{N\otimes_BJ} & q \\
0 & \id_{N}
\end{matrix}\right)\colon ((N\otimes_{B}J)^\natural\oplus N^\natural, \partial) \to  ((N\otimes_{B}J)^\natural\oplus N^\natural, \widetilde{\partial})$$
 of DG $B$-modules exists.
Since $\Phi$ is a DG $B$-module homomorphism, it commutes with the differentials and $q$ is a DG $B$-module homomorphism, that is, $q\in\Hom_B(N,N\otimes_BJ)$. Hence, $f_N=\partial^{N\otimes_BJ}q-q\partial^N$. This means that $[f_N]=0$ in $\Ext^1_B(N,N\otimes_BJ)$, as desired.
\end{proof}

\begin{defn}\label{para20210804a}
Following Theorems~\ref{prop20210802c} and~\ref{main}, for a semifree DG $B$-module $N$, we call $[f_N]=[\Delta_N]$ the \emph{obstruction class} to na\"{\i}ve liftability.
\end{defn}

\section{Another description of the obstruction class}\label{sec20210822a}

In this section, we provide another description of the obstruction class to na\"{\i}ve liftability that is equivalent to the one constructed in Theorem~\ref{prop20210802c}; see Theorem~\ref{20210706d} below, which is our main result in this section.
The notation used in this section comes from the previous sections. Recall that we still work in the setting of Convention~\ref{para20210317a}.
We start with introducing the notion of ``connections'' that was originally defined by Connes~\cite[II. \S 2]{AC} in non-commutative differential geometry. See also Cuntz and Quillen~\cite[\S 8]{CQ}.

\begin{defn}\label{para20210804c}
Let $N$ be a semifree DG $B$-module with a semifree basis $\mathcal{B}=\{e_{\lambda}\}_{\lambda\in \Lambda}$. We define a subset $\Diff_A^\delta(N)$ of $\grHom_{A^{\natural}}(N^{\natural},(N\otimes_BJ)^{\natural})$ by
$$
\Diff_A^\delta(N) = \{ D\colon N^{\natural}\to (N\otimes_BJ)^{\natural}\mid D(nb)=D(n)b+n\otimes \delta(b)\ \text{for}\ n\in N^{\natural}, b\in B^{\natural}\}.
$$
Each element of $\Diff_A^\delta(N)$ is called a {\it connection on $N$}.

Let $D^{\mathcal{B}}\colon N^{\natural}\to (N\otimes_BJ)^{\natural}$ be the graded $A^{\natural}$-linear homomorphism of degree $0$ satisfying $D^{\mathcal{B}}(\sum_{\lambda}e_\lambda b_\lambda)=\sum_{\lambda}e_\lambda \otimes\delta(b_\lambda)$, for all $e_\lambda\in \mathcal{B}$ and $b_\lambda\in B$.
Noting that $D^{\mathcal{B}}(e_{\lambda})=0$ for all $e_\lambda\in\mathcal{B}$, we have $D^{\mathcal{B}}\in \Diff_A^\delta(N)$.
Thus, $\Diff_A^\delta(N)$ is non-empty.
\end{defn}

\begin{lem}\label{20210707a}
Let $N$ be a semifree DG $B$-module with a semifree basis $\mathcal{B}=\{e_{\lambda}\}_{\lambda\in \Lambda}$. The following assertions hold.
\begin{enumerate}[\rm(1)]
\item For $D,D'\in\Diff_A^\delta(N)$, we have
$D=D'$ if and only if $D(e_\lambda)=D'(e_\lambda)$ for all $e_\lambda\in \mathcal{B}$.
In particular, $D^{\mathcal{B}}=D$ if and only if $D(e_\lambda)=0$ for all $e_\lambda\in \mathcal{B}$.
\item For all $D_1,D_2 \in \Diff_A^\delta(N)$, the mapping $D_1-D_2$ is $B^{\natural}$-linear.
\item For all $D\in\Diff_A^\delta(N)$ and all $f\in \grHom_{B^{\natural}}(N^{\natural},(N\otimes_BJ)^{\natural})$ we have $D+f\in \Diff_A^\delta(N)$.
\item We have the equality:
\begin{equation}\label{20210704-1}
\Diff_A^\delta(N)=D^{\mathcal{B}}+\grHom_{B^{\natural}}(N^{\natural},(N\otimes_BJ)^{\natural}).
\end{equation}
\end{enumerate}
\end{lem}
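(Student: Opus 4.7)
The plan is to verify (1)--(4) by direct computation, exploiting the observation that the ``inhomogeneous'' term $n \otimes \delta(b)$ in the connection identity $D(nb) = D(n)b + n\otimes \delta(b)$ does not depend on the choice of $D$. This makes $\Diff_A^\delta(N)$ an affine space over the linear space $\grHom_{B^{\natural}}(N^{\natural},(N\otimes_B J)^{\natural})$, and each of the four statements falls out of this observation.

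I would begin with (2) and (3), which are the cleanest. For (2), subtracting the connection identity for $D_2$ from that for $D_1$ cancels the $n \otimes \delta(b)$ term, leaving $(D_1 - D_2)(nb) = (D_1 - D_2)(n)\,b$, i.e.\ graded $B^{\natural}$-linearity of $D_1 - D_2$. For (3), given $D \in \Diff_A^\delta(N)$ and $f \in \grHom_{B^{\natural}}(N^{\natural},(N\otimes_B J)^{\natural})$, I just compute $(D+f)(nb) = D(n)b + n\otimes\delta(b) + f(n)b = (D+f)(n)\,b + n\otimes\delta(b)$, verifying that $D+f$ is again a connection.

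For (1), I would use that $\mathcal{B}$ is a graded $B^{\natural}$-basis of $N^{\natural}$, so every $n \in N^{\natural}$ has a unique finite expansion $n = \sum_{\lambda} e_\lambda b_\lambda$ with $b_\lambda \in B^{\natural}$. Combining graded $A^{\natural}$-linearity with the connection identity gives
\[
D(n) \;=\; \sum_{\lambda} \bigl(D(e_\lambda)\,b_\lambda + e_\lambda \otimes \delta(b_\lambda)\bigr),
\]
and since the second summand is the same for any connection, $D$ is determined by its restriction to $\mathcal{B}$. The ``in particular'' clause is then the special case $D' = D^{\mathcal{B}}$, recalling that $D^{\mathcal{B}}(e_\lambda) = 0$ by its definition.

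Finally, (4) is formal: the inclusion $\supseteq$ is (3) applied with $D = D^{\mathcal{B}}$, while the inclusion $\subseteq$ follows from (2), which says that $D - D^{\mathcal{B}} \in \grHom_{B^{\natural}}(N^{\natural},(N\otimes_B J)^{\natural})$ for every $D \in \Diff_A^\delta(N)$, so $D = D^{\mathcal{B}} + (D - D^{\mathcal{B}})$ lies in the right-hand side. There is no real obstacle; the entire proof is a short sequence of formal verifications once the defining identity is recognized as affine in $D$, with $n \otimes \delta(b)$ serving as the universal inhomogeneous part and $D^{\mathcal{B}}$ providing a canonical basepoint.
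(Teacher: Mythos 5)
Your proposal is correct and follows essentially the same route as the paper's proof: the same direct verifications for (2) and (3), the same basis-expansion argument $D(\sum_\lambda e_\lambda b_\lambda)=\sum_\lambda\bigl(D(e_\lambda)b_\lambda+e_\lambda\otimes\delta(b_\lambda)\bigr)$ for (1), and (4) obtained formally from (2) and (3) with $D^{\mathcal{B}}$ as basepoint. Your framing of $\Diff_A^\delta(N)$ as an affine space over $\grHom_{B^{\natural}}(N^{\natural},(N\otimes_BJ)^{\natural})$ is just a tidy way of packaging what the paper does implicitly; there is no substantive difference.
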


\begin{proof}
(1) If $D(e_\lambda)=D'(e_\lambda)$ for all $e_\lambda\in\mathcal{B}$,
then for every finite sum $\sum_{\lambda}e_\lambda b_\lambda\in N^{\natural}$ with $b_{\lambda}\in B^{\natural}$ we have the equalities
$
D\left(\sum_{\lambda}e_\lambda b_\lambda\right)
 =\sum_{\lambda}\left(D(e_\lambda)b_\lambda+e_\lambda \otimes\delta(b_\lambda)\right)
=\sum_{\lambda}\left(D'(e_\lambda)b_\lambda+e_\lambda \otimes\delta(b_\lambda)\right) =D'\left(\sum_{\lambda}e_\lambda b_\lambda\right)
$.
Hence, the equality $D=D'$ holds.

(2) For all $n\in N^{\natural}$ and $b\in B^{\natural}$ we have the equalities
$(D_1-D_2)(nb)= D_1(n)b+n\otimes\delta(b) - D_2(n)b -n\otimes\delta(b) = (D_1-D_2)(n)b$.
Hence, $D_1-D_2$ is $B^{\natural}$-linear.

(3) For all $n\in N^{\natural}$ and $b\in B^{\natural}$ we have
$
(D+f)(nb)
= D(nb)+f(nb)
= D(n)b+n\otimes\delta(b) + f(n)b
= (D+f)(n)b + n\otimes\delta(b)
$.
Therefore, $D+f\in\Diff_A^\delta(N)$.

(4) follows immediately from statements (2) and (3).
\end{proof}

\begin{notation}\label{para20210804d}
Let $N$ be a semifree DG $B$-module with a semifree basis $\mathcal{B}=\{e_{\lambda}\}_{\lambda\in \Lambda}$. Given a subset $\Gamma=\{\gamma_\lambda\}_{\lambda\in\Lambda}$ of $(N\otimes_BJ)^{\natural}$ with $|\gamma_\lambda|=|e_\lambda|$ for all $\lambda\in\Lambda$,
we define a graded $A^{\natural}$-linear homomorphism $D_\Gamma\colon N^{\natural}\to (N\otimes_BJ)^{\natural}$ of degree $0$ by
$$
D_\Gamma\left(\sum_{\lambda}e_{\lambda}b_{\lambda}\right)=\sum_{\lambda}\left(\gamma_{\lambda} b_{\lambda}+e_{\lambda}\otimes \delta(b_{\lambda})\right)
$$
where $b_{\lambda}\in B^{\natural}$ and $b_\lambda\neq 0$ for only finitely many $\lambda$. Note that $D_\Gamma$ is well-defined and $D_\Gamma\in\Diff_A^\delta(N)$ by definition.
\end{notation}

\begin{lem}\label{20210708a}
Let $N$ be a semifree DG $B$-module with a semifree basis $\mathcal{B}=\{e_{\lambda}\}_{\lambda\in \Lambda}$. There is a one-to-one correspondence
$$
\xymatrix{
\Diff_A^\delta(N)\   \ar@<0.7ex>[rr]^-{f}   && \ar@<.7mm>[ll]^-{g}\  {\displaystyle\prod_{\lambda\in\Lambda}(N\otimes_{B} J)^{\natural}_{|e_\lambda|}}
}
$$
defined by $f(D)=\{D(e_\lambda)\}_{\lambda\in\Lambda}$ and $g(\Gamma)=D_\Gamma$.
\end{lem}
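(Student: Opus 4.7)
The plan is to verify that the maps $f$ and $g$ are mutually inverse, with both checks reducing to evaluation on the semifree basis $\mathcal{B}$. Crucially, Notation~\ref{para20210804d} already guarantees that $D_\Gamma$ is a well-defined element of $\Diff_A^\delta(N)$ for any family $\Gamma=\{\gamma_\lambda\}_{\lambda\in\Lambda}$ with $|\gamma_\lambda|=|e_\lambda|$, so no further well-definedness argument is needed for $g$. On the other side, $f$ is manifestly well-defined because $D\in\Diff_A^\delta(N)$ is a degree-$0$ graded $A^{\natural}$-linear map and thus sends $e_\lambda$ to an element of $(N\otimes_B J)_{|e_\lambda|}$.

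To show $f\circ g=\id$, I would fix $\Gamma=\{\gamma_\lambda\}_{\lambda\in\Lambda}$ and compute $D_\Gamma(e_\lambda)$ for each $\lambda$. Writing $e_\lambda=e_\lambda\cdot 1$ and using the defining formula of $D_\Gamma$ from Notation~\ref{para20210804d} gives
\[
D_\Gamma(e_\lambda)=\gamma_\lambda\cdot 1+e_\lambda\otimes\delta(1).
\]
The key observation is that $\delta(1)=1^o\otimes 1-1^o\otimes 1=0$ directly from Definition~\ref{para20210317c}, so $D_\Gamma(e_\lambda)=\gamma_\lambda$ and hence $f(g(\Gamma))=\{D_\Gamma(e_\lambda)\}_{\lambda\in\Lambda}=\Gamma$.

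To show $g\circ f=\id$, I would fix $D\in\Diff_A^\delta(N)$, set $\Gamma=\{D(e_\lambda)\}_{\lambda\in\Lambda}$, and note that by construction $D_\Gamma(e_\lambda)=D(e_\lambda)$ for every $\lambda\in\Lambda$ (using $\delta(1)=0$ as above). Since both $D_\Gamma$ and $D$ lie in $\Diff_A^\delta(N)$ and agree on the semifree basis $\mathcal{B}$, Lemma~\ref{20210707a}(1) yields $D_\Gamma=D$, i.e. $g(f(D))=D$.

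There is no real obstacle; the argument is bookkeeping once the two inputs are in place, namely the vanishing $\delta(1)=0$ (which trivializes the ``derivation correction term'' $e_\lambda\otimes\delta(1)$) and the rigidity statement Lemma~\ref{20210707a}(1) that a connection is determined by its values on a semifree basis. Together these reduce both composites to the identity by direct inspection on $\mathcal{B}$.
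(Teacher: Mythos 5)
Your proof is correct and follows essentially the same route as the paper: both arguments check the two composites on the semifree basis, using that $D_\Gamma(e_\lambda)=\gamma_\lambda$ (your explicit observation $\delta(1)=0$ is exactly why this holds) and invoking Lemma~\ref{20210707a}(1) to conclude $g(f(D))=D$.
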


\begin{proof}
Let $D\in\Diff_A^\delta(N)$ and $\Gamma=\{\gamma_\lambda\}_{\lambda\in\Lambda}\subset (N\otimes_BJ)^{\natural}$ such that $|\gamma_\lambda|=|e_\lambda|$ for $\lambda\in\Lambda$.
We have $(g f)(D)(e_\mu)=D_{\{D(e_\lambda)\}}(e_\mu)=D(e_\mu)$ for all $e_\mu\in\mathcal{B}$.
It follows from Lemma \ref{20210707a}(1) that $(gf)(D)=D$.
On the other hand, $(fg)(\Gamma)=\{D_\Gamma(e_\lambda)\}_{\lambda\in\Lambda}=\{\gamma_\lambda\}_{\lambda\in\Lambda}=\Gamma$.
\end{proof}

\begin{notation}\label{para20210804e}
Let $N$ be a semifree DG $B$-module with a semifree basis $\mathcal{B}=\{e_{\lambda}\}_{\lambda\in \Lambda}$. For every $D\in \Diff_A^\delta(N)$, let $$\alpha(D):= D\partial^N-\partial^{N\otimes_B J}D.$$ Note that $\alpha(D) \colon N^{\natural}\to (N\otimes_BJ)^{\natural}$ is a graded $A^{\natural}$-linear map of degree $-1$.
\end{notation}

\begin{lem}\label{20210706a}
Let $N$ be a semifree DG $B$-module with a semifree basis $\mathcal{B}=\{e_{\lambda}\}_{\lambda\in \Lambda}$, and let $D\in \Diff_A^\delta(N)$. Then the map $\alpha(D)\colon N\to \shift(N\otimes_B J)$ is a DG $B$-module homomorphism.
Hence, there is a mapping
$$
\Diff_A^\delta(N) \to \Hom_{\K(B)}(N,\shift  (N\otimes _{B} J) )
$$
defined by $D \mapsto [\alpha(D)]$.
Moreover, for all $D_1,D_2\in \Diff_A^\delta (N)$, the equality $$[\alpha(D_1)]=[\alpha(D_2)]$$ holds in $\Hom_{\K(B)}(N, \shift (N\otimes_B J))$.
\end{lem}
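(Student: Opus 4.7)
The plan is to establish two main claims: first, that $\psi_D$ is a genuine DG $B$-module homomorphism $N \to \shift(N\otimes_BJ)$, which guarantees that its homotopy class $[\psi_D]$ is a well-defined element of $\Hom_{\K(B)}(N,\shift(N\otimes_BJ))$; and second, that this class is independent of the chosen connection $D$.

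For the first claim, I would verify that $\psi_D$ is right $B^\natural$-linear and that it commutes with the appropriate differentials. The $B^\natural$-linearity is the more delicate step: expand $\psi_D(nb) = \partial^{N\otimes_BJ}(D(nb)) - D(\partial^N(nb))$ by applying the connection identity $D(nb) = D(n)b + n\otimes \delta(b)$ and the Leibniz rules for $\partial^N$ and $\partial^{N\otimes_BJ}$. The resulting eight terms split into three groups that cancel pairwise: the two $D(n)\,d^B(b)$ terms cancel because $|D(n)|=|n|$ (as $|D|=0$) so the signs $(-1)^{|D(n)|}$ and $(-1)^{|n|}$ agree, the two $\partial^N(n)\otimes\delta(b)$ terms cancel directly, and the two $n\otimes\delta(d^B(b))$ terms cancel after rewriting $\partial^J\delta(b)$ as $\delta(d^B(b))$ via the compatibility~\eqref{eq20210523s}. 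The surviving expression is exactly $\psi_D(n)\,b$. The chain-map property is essentially formal: both $\psi_D\partial^N$ and $\partial^{\shift(N\otimes_BJ)}\psi_D = -\partial^{N\otimes_BJ}\psi_D$ collapse to $\partial^{N\otimes_BJ}D\partial^N$ by virtue of $(\partial^N)^2 = (\partial^{N\otimes_BJ})^2 = 0$, and hence agree. This proves $\psi_D \in \Hom_B(N, \shift(N\otimes_BJ))$ and makes the map $D\mapsto [\psi_D]$ well-defined.

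For independence of $D$, I would argue in the Hom complex $\Hom_B(N, N\otimes_B J)$ using the identification $\Ext^1_B(N,N\otimes_BJ) = H_{-1}(\Hom_B(N,N\otimes_BJ)) = \Hom_{\K(B)}(N,\shift(N\otimes_BJ))$. Given $D_1,D_2 \in \Diff_A^\delta(N)$, set $h := D_1-D_2$. By Lemma~\ref{20210707a}(2), $h$ is right $B^\natural$-linear of degree $0$, so it is an element of $\Hom_B(N,N\otimes_BJ)$ in degree $0$. Applying the Hom-complex differential yields $\partial^{N\otimes_BJ}h - h\partial^N = \psi_{D_1} - \psi_{D_2}$, exhibiting the difference as a boundary and hence $[\psi_{D_1}] = [\psi_{D_2}]$ in $\Hom_{\K(B)}(N,\shift(N\otimes_BJ))$.

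The main obstacle is bookkeeping in the $B^\natural$-linearity verification of the first step: one must carefully juggle the Leibniz rules for three different differentials ($\partial^N$, $\partial^{N\otimes_BJ}$, and $\partial^J$) alongside the defining identity of a connection and track parity signs. There is no conceptual difficulty; the only nontrivial inputs are Lemma~\ref{20210707a}(2) and the compatibility~\eqref{eq20210523s} between the universal derivation $\delta$ and the ambient differentials.
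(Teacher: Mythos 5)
Your proposal is correct and follows essentially the same route as the paper's proof: the same expansion of $\psi_D(nb)$ with pairwise cancelation of terms via the connection identity and the compatibility~\eqref{eq20210523s} establishes $B^{\natural}$-linearity, the chain-map property is the same formal computation the paper dismisses as straightforward, and the homotopy invariance is obtained exactly as in the paper by taking the difference of two connections, invoking Lemma~\ref{20210707a}(2) for its $B^{\natural}$-linearity, and exhibiting $\psi_{D_1}-\psi_{D_2}$ as a boundary in the Hom complex (your homotopy $h=D_1-D_2$ is, up to sign, the paper's $f=D_2-D_1$).
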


\begin{proof}
It is straightforward to see that $\partial^{\shift (N\otimes_B J)}\alpha(D) -\alpha(D) \partial^N = 0$. Hence, $\alpha(D)$ is a chain map.
On the other hand, for all $n\in N^{\natural}$ and $b\in B^{\natural}$ we have the equalities
\begin{align*}
\alpha(D)(nb)
=&  (D\partial^N-\partial^{N\otimes_B J}D ) (nb) \\
=& D (\partial^N(n)b+(-1)^{|n|}n d^B(b)) - \partial^{N\otimes_B J}(D(n)b+n\otimes \delta (b) ) \\
=& D(\partial^N(n))b + \partial^N(n)\otimes \delta(b) + (-1)^{|n|}D(n)d^B(b) + (-1)^{|n|}n\otimes \delta(d^B(b))\\
-& \partial^{N\otimes_B J}(D(n))b - (-1)^{|n|}D(n)d^B(b) - \partial^N(n)\otimes \delta(b)  - (-1)^{|n|} n \otimes \partial^J(\delta(b))\\
=&  (D\partial^N - \partial^{N\otimes_B J}D)(n) b\\
=& \alpha(D)(n) b
\end{align*}
where the fourth equality uses~\eqref{eq20210523s}. Hence, $\alpha(D)$ is $B^{\natural}$-linear and we conclude that $\alpha(D)\in \Hom_{B}(N, \shift (N\otimes_B J))$.
Note that $\alpha(D)$ defines an element $[\alpha(D)]$ in $\Hom_{\K(B)}(N, \shift (N\otimes_BJ))$.

For the last assertion, assume $D_1,\,D_2\in \Diff_A^\delta(N)$. Let $f=D_1-D_2$ and note that $f$ is $B^{\natural}$-linear by Lemma~\ref{20210707a}(2).  Then we have the equalities
\begin{align*}
\alpha(D_1)-\alpha(D_2)
& = ( D_1\partial^N - \partial^{N\otimes_B J}D_1 ) - ( D_2\partial^N - \partial^{N\otimes_B J}D_2 ) = \partial^{\shift N\otimes_B J}f + f\partial^N.
\end{align*}
Hence, $[\alpha(D_1)]=[\alpha(D_2)] $ in $\Hom_{\K(B)}(N, \shift (N\otimes_B J))$, as desired.
\end{proof}

\begin{defn}
Using the notation from Lemma~\ref{20210706a}, it follows that the class $[\alpha(D)]$ in $\Hom_{\K(B)}(N,\shift  (N\otimes _{B} J) )$ does not depend on the choice of $D\in \Diff_A^\delta(N)$.
We call $[\alpha(D)]$ the Atiyah class of $N$.
\end{defn}

\begin{disc}
There exist other definitions of Atiyah class in the literature; see, for instance,~\cite{BF,CM}.
However, our above definition of Atiyah class is different from the existing ones in~\cite{BF,CM}.
\end{disc}

In the following, recall that $\Delta_N$ is introduced in Theorem~\ref{prop20210802c}.

\begin{thm}\label{20210706d}
Let $N$ be a semifree DG $B$-module with a semifree basis $\mathcal{B}=\{e_{\lambda}\}_{\lambda\in \Lambda}$. The following equality holds:
\begin{equation}\label{eq20210805a}
\alpha(D^{\mathcal{B}})=\Delta_N.
\end{equation}
\end{thm}

\begin{proof}
For a basis element $\e_{\lambda}$ of $\mathcal{B}$, write $\partial^N(e_\lambda)=\sum_{\mu <\lambda}e_\mu b_{\mu\lambda}$, which is a finite sum with $b_{\mu\lambda}\in B^{\natural}$.
Then we have the equalities
\begin{align*}
\alpha(D^{\mathcal{B}})(e_\lambda)
 = (D^{\mathcal{B}}\partial^N- \partial^{N\otimes_B J} D^{\mathcal{B}})(e_\lambda)
 = D^{\mathcal{B}}\left(\partial^{N}(e_\lambda)\right)
 =  \sum_{\mu<\lambda}e_\mu \otimes \delta(b_{\mu\lambda})
 = \Delta_N(e_\lambda)
\end{align*}
in which the last equality follows from~\eqref{eq20210523b}.
Now, the equality~\eqref{eq20210805a} follows from the fact that $\alpha(D^{\mathcal{B}})$ and $\Delta_N$ are both $B^{\natural}$-linear.
\end{proof}


Moreover, we can prove the following result.

\begin{prop}\label{20210626a}
Let $N$ be a semifree DG $B$-module with a semifree basis $\mathcal{B}=\{e_{\lambda}\}_{\lambda\in \Lambda}$. The following statements are equivalent.
\begin{enumerate}[\rm(i)]
\item $N$ is na\"{i}vely liftable to $A$.
\item  $[\alpha(D)]=0$ holds in $\Ext_B^{1}(N, N\otimes_BJ)$ for all $D\in\Diff_A^\delta(N)$.
\item $[\alpha(D)]=0$ holds in $\Ext_B^{1}(N, N\otimes_BJ)$ for some $D\in\Diff_A^\delta(N)$.
\item $\alpha(D)=0$ holds in $\Hom_{B}(N,\, \shift (N\otimes_B J))$ for some $D\in\Diff_A^\delta(N)$.
\item $\alpha(D_\Gamma)=0$ holds in $\Hom_{B}(N,\, \shift (N\otimes_B J))$ for some subset $\Gamma=\{\gamma_\lambda\}_{\lambda\in\Lambda}$ of $(N\otimes_BJ)^{\natural}$ with $|\gamma_\lambda|=|e_\lambda|$ for all $\lambda\in \Lambda$.
\end{enumerate}
\end{prop}

\begin{proof}
(i)$\implies$(iii) and (ii)$\implies$(i) follow from Theorem~\ref{main} and Theorem~\ref{20210706d}.

(iii)$\implies$(ii) follows from Lemma \ref{20210706a}.

(iv)$\implies$(iii) is trivial.

(iii)$\implies$(iv) Let $D\in\Diff_A^\delta(N)$ such that $[\alpha(D)]=0$ in $\Ext_B^{1}(N, N\otimes_BJ)$. There is a graded $B^{\natural}$-module homomorphism $h\colon N^{\natural}\to (N\otimes_BJ)^{\natural}$ of degree 0 such that $\alpha(D)=\partial^{\shift N\otimes_B J}h+h\partial^N$.
Note that $D-h \in \Diff_A^\delta(N)$ by Lemma~\ref{20210707a}(3).
By definition we also have $\alpha(D)=D\partial^N - \partial^{N\otimes_B J}D$. Therefore,
$
\alpha(D-h)= (D-h)\partial^N - \partial^{N\otimes_B J}(D-h)=\alpha(D)-\alpha(D)=0.
$

(iv)$\Longleftrightarrow$(v) is clear from Lemma~\ref{20210708a}.
\end{proof}

\section{Some concrete examples}\label{sec20220306a}

In this section, we will construct concrete examples of DG modules that do and do not satisfy na\"{\i}ve liftability; see Examples~\ref{ex20210815a} and~\ref{ex20210815g}. The main tool to construct such examples is the following theorem, which is our main result in this section. (Again, our notation in this section comes from the previous sections and we still work in the setting of Convention~\ref{para20210317a}.)

\begin{thm}\label{20210626b}
Let $N$ be a semifree DG $B$-module with a semifree basis $\mathcal{B}=\{e_{\lambda}\}_{\lambda\in \Lambda}$. Write $\partial^N(e_\lambda)=\sum_{\mu<\lambda}e_{\mu}b_{\mu\lambda}$ as a finite sum with $b_{\mu\lambda}\in B^{\natural}$.
Then the following assertions are equivalent.
\begin{enumerate}[\rm(i)]
\item $N$ is na\"{i}vely liftable to $A$.
\item There is a subset $\{\gamma_\lambda\}_{\lambda\in\Lambda}$ of $(N\otimes_BJ)^{\natural}$ with
$|\gamma_\lambda|=|e_\lambda|$ for all $\lambda\in \Lambda$ such that
$\partial^{N\otimes_BJ} (\gamma_\lambda)=\sum_{\mu < \lambda}\left(\gamma_{\mu}b_{\mu\lambda}+e_{\mu}\otimes \delta(b_{\mu\lambda})\right)$.
\end{enumerate}
\end{thm}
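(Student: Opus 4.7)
The plan is to derive Theorem~\ref{20210626b} as a direct consequence of the equivalence between (i) and (v) in Lemma~\ref{20210626a} together with the explicit description of the connection $D_\Gamma$ given in Notation~\ref{para20210804d}. Concretely, condition (v) of Lemma~\ref{20210626a} asserts that $N$ is na\"{\i}vely liftable to $A$ if and only if there exists a family $\Gamma=\{\gamma_\lambda\}_{\lambda\in\Lambda}\subseteq (N\otimes_B J)^{\natural}$ with $|\gamma_\lambda|=|e_\lambda|$ such that $\psi_{D_\Gamma}=0$ in $\Hom_{B}(N,\shift(N\otimes_B J))$. My task reduces to unpacking what the vanishing $\psi_{D_\Gamma}=0$ says about the $\gamma_\lambda$.

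First I would observe that, since $\psi_{D_\Gamma}\colon N\to \shift(N\otimes_B J)$ is $B^{\natural}$-linear by Lemma~\ref{20210706a}, and since $\mathcal{B}=\{e_\lambda\}_{\lambda\in\Lambda}$ is a $B^{\natural}$-basis for $N^{\natural}$, we have
\begin{equation*}
\psi_{D_\Gamma}=0\text{ in }\Hom_B(N,\shift(N\otimes_B J))\iff \psi_{D_\Gamma}(e_\lambda)=0\text{ for all }\lambda\in\Lambda.
\end{equation*}
Next I would compute $\psi_{D_\Gamma}(e_\lambda)$ using its definition $\psi_{D_\Gamma}=\partial^{N\otimes_B J}D_\Gamma-D_\Gamma\partial^N$. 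From the formula for $D_\Gamma$ in Notation~\ref{para20210804d} one has $D_\Gamma(e_\lambda)=\gamma_\lambda$ (using that $\delta(1)=0$), and
\begin{equation*}
D_\Gamma(\partial^N(e_\lambda))=D_\Gamma\!\left(\sum_{e_\mu<e_\lambda}e_\mu b_{\mu\lambda}\right)=\sum_{e_\mu<e_\lambda}\bigl(\gamma_\mu b_{\mu\lambda}+e_\mu\otimes\delta(b_{\mu\lambda})\bigr).
\end{equation*}
Substituting gives
\begin{equation*}
\psi_{D_\Gamma}(e_\lambda)=\partial^{N\otimes_B J}(\gamma_\lambda)-\sum_{e_\mu<e_\lambda}\bigl(\gamma_\mu b_{\mu\lambda}+e_\mu\otimes\delta(b_{\mu\lambda})\bigr),
\end{equation*}
so the vanishing of $\psi_{D_\Gamma}$ on all basis elements is equivalent to the system of equations in (ii).

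Finally I would combine these two steps: the existence of $\Gamma$ with $\psi_{D_\Gamma}=0$ is equivalent (via the computation above) to the existence of $\{\gamma_\lambda\}$ satisfying the equations in (ii), and this in turn is equivalent to na\"{\i}ve liftability of $N$ to $A$ by Lemma~\ref{20210626a}. There is no serious obstacle here; the only point requiring care is the correct bookkeeping of the shift and signs in the definition of $\psi_D$, but since $\psi_D=\partial^{N\otimes_B J}D-D\partial^N$ is written using $\partial^{N\otimes_B J}$ itself (not its shifted version), the computation on basis elements produces exactly the formula appearing in (ii) with no sign ambiguity.
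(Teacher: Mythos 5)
Your proposal is correct and follows essentially the same route as the paper: both reduce the theorem to the equivalence (i)$\Longleftrightarrow$(v) of Lemma~\ref{20210626a} and then compute $\psi_{D_\Gamma}(e_\lambda)=\partial^{N\otimes_B J}(\gamma_\lambda)-\sum_{e_\mu<e_\lambda}\bigl(\gamma_\mu b_{\mu\lambda}+e_\mu\otimes\delta(b_{\mu\lambda})\bigr)$ exactly as in the paper's proof. Your only addition is making explicit that $B^{\natural}$-linearity of $\psi_{D_\Gamma}$ (Lemma~\ref{20210706a}) lets one test vanishing on the semifree basis, a point the paper leaves implicit.
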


We give the proof of Theorem~\ref{20210626b} after the following proposition.

\begin{prop}\label{disc20210806a}
Let $N$ be a semifree DG $B$-module with a semifree basis $\mathcal{B}=\{e_{\lambda}\}_{\lambda\in \Lambda}$, and let $\{\gamma_\lambda\}_{\lambda\in\Lambda}$ be a subset of $(N\otimes_BJ)^{\natural}$ with
$|\gamma_\lambda|=|e_\lambda|$ for all $\lambda\in \Lambda$. Then
$\sum_{\mu < \lambda}\left(\gamma_{\mu}b_{\mu\lambda}+e_{\mu}\otimes \delta(b_{\mu\lambda})\right)$ is a cycle in $N\otimes_B J$ for $\lambda\in\Lambda$.
\end{prop}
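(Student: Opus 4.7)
The plan is to write
$c_\lambda := \sum_{e_\mu<e_\lambda}\bigl(\gamma_\mu b_{\mu\lambda} + e_\mu \otimes \delta(b_{\mu\lambda})\bigr)$
conceptually as $D_\Gamma(\partial^N(e_\lambda))$, where $D_\Gamma \in \Diff_A^\delta(N)$ is the connection associated with $\Gamma = \{\gamma_\lambda\}$ via Notation~\ref{para20210804d}. This identification is immediate from the defining formula for $D_\Gamma$ applied to the expansion $\partial^N(e_\lambda) = \sum_{e_\mu<e_\lambda} e_\mu b_{\mu\lambda}$: the two kinds of summands of $D_\Gamma$ (the $\gamma$-piece and the $\delta$-piece) reproduce exactly the two kinds of summands in $c_\lambda$.

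I would then apply $\partial^{N \otimes_B J}$ and invoke the defining identity $\partial^{N \otimes_B J} \circ D_\Gamma = D_\Gamma \circ \partial^N + \psi_{D_\Gamma}$ from Notation~\ref{para20210804e}. Combined with $(\partial^N)^2 = 0$ this collapses to
$\partial^{N \otimes_B J}(c_\lambda) = \psi_{D_\Gamma}(\partial^N(e_\lambda))$.
Expanding $\partial^N(e_\lambda)$ and using the $B^\natural$-linearity of $\psi_{D_\Gamma}$ (Lemma~\ref{20210706a}) rewrites the right-hand side as $\sum_\mu \psi_{D_\Gamma}(e_\mu)\,b_{\mu\lambda}$, which reduces the entire question to a statement about the values of $\psi_{D_\Gamma}$ on the basis elements $e_\mu$ with $e_\mu < e_\lambda$.

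The main obstacle is the final cancellation, and my plan is to fall back on a direct expansion in the spirit of the chain-map calculation carried out in Theorem~\ref{prop20210802c}. Concretely: apply the Leibniz rule for $\partial^{N \otimes_B J}$ term-by-term to each $\gamma_\mu b_{\mu\lambda}$ and each $e_\mu \otimes \delta(b_{\mu\lambda})$; replace $\partial^J \circ \delta$ by $\delta \circ d^B$ using~\eqref{eq20210523s}; and substitute $\partial^N(e_\mu) = \sum_{e_\nu<e_\mu} e_\nu b_{\nu\mu}$, moving the coefficient $b_{\nu\mu}$ across the tensor via the left $B$-action on $J\subseteq B^e$ recorded in~\eqref{eq20210523a}. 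After regrouping by the basis element $e_\nu$, the $d^B$-terms should be eliminated by the quadratic semifree-basis relation~\eqref{eq20210523d} arising from $(\partial^N)^2=0$, combined with the Leibniz rule $\delta(b_{\nu\mu} b_{\mu\lambda}) = \delta(b_{\nu\mu}) b_{\mu\lambda} + b_{\nu\mu}\delta(b_{\mu\lambda})$ for the universal derivation. The principal difficulty is the sign bookkeeping coming from the Koszul signs in the tensor-product differential and the $(-1)^{|e_\mu|}$ factors in the Leibniz rule, but the structure of the computation mirrors that of Theorem~\ref{prop20210802c} closely enough that I would use that earlier argument as a template for organizing the terms.
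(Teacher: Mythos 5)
Your opening reduction is correct and is in fact an attractive repackaging: writing the sum as $c_\lambda=D_\Gamma(\partial^N(e_\lambda))$ and combining $(\partial^N)^2=0$ with the definition $\psi_{D_\Gamma}=\partial^{N\otimes_BJ}D_\Gamma-D_\Gamma\partial^N$ and the $B^{\natural}$-linearity of $\psi_{D_\Gamma}$ from Lemma~\ref{20210706a} does yield $\partial^{N\otimes_BJ}(c_\lambda)=\sum_{e_\mu<e_\lambda}\psi_{D_\Gamma}(e_\mu)\,b_{\mu\lambda}$. But your fallback step --- establishing this vanishing by an \emph{unconditional} term-by-term expansion modeled on Theorem~\ref{prop20210802c} --- cannot succeed, and the failure is a missing hypothesis, not sign bookkeeping. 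As in the proof of Theorem~\ref{20210626b},
$$
\psi_{D_\Gamma}(e_\mu)=\partial^{N\otimes_BJ}(\gamma_\mu)-\sum_{e_\nu<e_\mu}\left(\gamma_\nu b_{\nu\mu}+e_\nu\otimes\delta(b_{\nu\mu})\right),
$$
and the tools you list (the identity~\eqref{eq20210523s}, the bimodule actions~\eqref{eq20210523a}, the relation~\eqref{eq20210523d}, and the derivation property of $\delta$) provide nothing to cancel the terms $\partial^{N\otimes_BJ}(\gamma_\mu)b_{\mu\lambda}$, since the $\gamma_\mu$ are a priori arbitrary. Indeed, for completely arbitrary $\Gamma$ the claim is \emph{false}: with $B=R\langle X,Y\mid dX=x,\,dY=Xy\rangle$ as in Example~\ref{ex20210815a}, take $N$ with semifree basis $e_1<e_2<e_3$ of degrees $0,2,3$, with $\partial^N(e_2)=0$, $\partial^N(e_3)=e_2$, and set $\gamma_{e_2}=e_1\otimes\delta(Y)$. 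Then the sum for $\lambda=e_3$ is $\gamma_{e_2}\cdot 1+e_2\otimes\delta(1)=e_1\otimes\delta(Y)$, whose boundary is $e_1\otimes\partial^J\delta(Y)=e_1\otimes\delta(X)y\neq 0$ (under the isomorphism $f$ of Example~\ref{ex20210815g}, $\delta(X)y$ maps to $X'y\neq 0$).

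What rescues the paper's proof is an induction your proposal never invokes. The proposition is stated loosely: as the paper's proof and the remark following Theorem~\ref{20210626b} make clear, it is meant to be applied inside the recursive construction of a family satisfying Theorem~\ref{20210626b}(ii), and the ``inductive hypothesis'' cited at the second equality of the paper's computation is precisely the recursion $\partial^{N\otimes_BJ}(\gamma_\mu)=\sum_{e_\nu<e_\mu}\left(\gamma_\nu b_{\nu\mu}+e_\nu\otimes\delta(b_{\nu\mu})\right)$ for all $e_\mu<e_\lambda$. Substituting this for $\partial^{N\otimes_BJ}(\gamma_\mu)$ is what creates the products $\gamma_\nu b_{\nu\mu}b_{\mu\lambda}$ and $e_\nu\otimes\delta(b_{\nu\mu})b_{\mu\lambda}$ that the relation~\eqref{eq20210523d} and the Leibniz rule $\delta(b_{\nu\mu}b_{\mu\lambda})=\delta(b_{\nu\mu})b_{\mu\lambda}+b_{\nu\mu}\delta(b_{\mu\lambda})$ then annihilate. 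Once you add that hypothesis, your framework closes the argument in one line --- it says exactly that $\psi_{D_\Gamma}(e_\mu)=0$ for all $e_\mu<e_\lambda$, whence $\partial^{N\otimes_BJ}(c_\lambda)=0$ --- which is arguably tidier than the paper's direct expansion. As written, however, you are attempting a cancellation that does not exist.
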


\begin{proof}
We proceed by induction on $\lambda$.
The assertion is trivial for $\lambda=\varpi$, where $\varpi$ denotes the minimum element of $\Lambda$.
Now, assume $\lambda>\varpi$. Use the general protocol that $\partial^N(e_i)=\sum_{j<i}e_{j}b_{ji}$ is a finite sum with all $b_{ji}\in B^{\natural}$. Then we have
{\small
\begin{align*}
&\partial^{N\otimes_B J}\left(\sum_{\mu < \lambda }\left(\gamma_{\mu}b_{\mu\lambda}+ e_{\mu}\otimes \delta(b_{\mu\lambda}\right))\right)\\
&=\sum_{\mu<\lambda}\left(\partial^{N\otimes_B J}(\gamma_{\mu})b_{\mu\lambda}+(-1)^{|\gamma_\mu|}\gamma_\mu d^B(b_{\mu\lambda})+\partial^N(e_{\mu})\otimes \delta(b_{\mu\lambda})+(-1)^{|e_{\mu|}}e_\mu \otimes \partial^J\delta(b_{\mu\lambda})\right)\\
&= \sum_{\mu < \lambda}  \left(\sum_{\nu < \mu} \left(\gamma_\nu b_{\nu\mu} + e_\nu\otimes \delta(b_{\nu\mu})\right)\right)b_{\mu\lambda} + \sum_{\mu < \lambda} (-1)^{|\gamma_\mu|}\gamma_\mu d^B(b_{\mu\lambda}) \\
& \qquad\qquad + \sum_{\mu < \lambda } \sum_{\nu < \mu }\left(e_{\nu}b_{\nu\mu}\otimes\delta(b_{\mu\lambda})\right)+\sum_{\mu < \lambda} \left((-1)^{|e_{\mu}|}e_\mu \otimes \partial^J\delta(b_{\mu\lambda})\right) \\
&=  \sum_{\nu<\lambda} \gamma_\nu \left( \sum_{\nu<\mu<\lambda}b_{\nu\mu}b_{\mu\lambda}+(-1)^{|\gamma_\nu|}d^B(b_{\nu\lambda}) \right)+ \sum_{\nu<\lambda} e_{\nu} \otimes \left( \sum_{\nu<\mu<\lambda}\delta(b_{\nu\mu} b_{\mu\lambda})+(-1)^{|e_\nu|}\delta(d^B(b_{\nu\lambda})) \right)\\
&= 0
\end{align*}}
where the second equality follows from the inductive hypothesis and the third equality follows from~\eqref{eq20210523s}.
To see the last equality, note that
\begin{align*}
0=(\partial^N)^2(e_{\lambda})=\partial^N\left(\sum_{\mu<\lambda}e_{\mu}b_{\mu\lambda}\right)
& =\sum_{\mu<\lambda}\left(\partial^N(e_{\mu})b_{\mu\lambda}+(-1)^{|e_{\mu}|}e_{\mu}d^Bb_{\mu\lambda}\right)\\
&=\sum_{\mu<\lambda}\left(\sum_{\nu<\mu}e_{\nu}b_{\nu\mu}b_{\mu\lambda}+(-1)^{|e_{\mu}|}e_{\mu}d^Bb_{\mu\lambda}\right)\\
&=\sum_{\nu<\lambda}e_{\nu}\left(\sum_{\nu<\mu<\lambda}b_{\nu\mu}b_{\mu\lambda}+(-1)^{|e_{\nu}|}d^Bb_{\nu\lambda}\right).
\end{align*}
Since $\mathcal{B}$ is a semifree basis for $N$, for $\nu<\lambda$, we conclude that
$$\sum_{\nu<\mu<\lambda}b_{\nu\mu}b_{\mu\lambda}+(-1)^{|e_{\nu}|}d^Bb_{\nu\lambda}=0$$
and hence, the last equality holds.
\end{proof}

\noindent {\it Proof of Theorem~\ref{20210626b}.}
Let $\Gamma=\{\gamma_\lambda\}_{\lambda\in\Lambda}$ be a subset of $(N\otimes_BJ)^{\natural}$ with $|\gamma_\lambda|=|e_\lambda|$ for $\lambda\in\Lambda$. Recall from Notation~\ref{para20210804d} that
the $A^{\natural}$-linear homomorphism $D_\Gamma\in \Diff_A^\delta(N)$ is defined by the formula
$D_\Gamma\left(\sum_{\lambda}e_{\lambda}b_{\lambda}\right)=\sum_{\lambda}\gamma_{\lambda} b_{\lambda}+e_{\lambda}\otimes \delta(b_{\lambda})$.
Now we have
\begin{align*}
\alpha(D_{\Gamma})(e_\lambda)
& = D_{\Gamma}(\partial^N(e_\lambda)) - \partial^{N\otimes_B J} (D_{\Gamma}(e_\lambda))
 = D_\Gamma\left(\sum_{\mu< \lambda}e_{\mu}b_{\mu\lambda}\right) - \partial^{N\otimes_B J}(\gamma_\lambda)\\
& = \sum_{\mu<\lambda}\left(\gamma_{\mu}b_{\mu\lambda}+e_{\mu}\otimes \delta(b_{\mu\lambda})\right) - \partial^{N\otimes_B J}(\gamma_\lambda).
\end{align*}
Using this equality, the assertion of Theorem~\ref{20210626b} follows from the equivalence (i)$\Longleftrightarrow$(v) in Proposition~\ref{20210626a}. \qed

\begin{disc}
If the elements $\gamma_\lambda$ of a set $\{\gamma_\lambda\}_{\lambda\in\Lambda}$ satisfy Theorem~\ref{20210626b}(ii), then they can be described by induction.
Let $\varpi$ denote the minimum element of $\Lambda$ and assume $\varpi<\lambda$.
Consider the element $\xi_\lambda=\sum_{\mu < \lambda}\left(\gamma_{\mu}b_{\mu\lambda}+e_{\mu}\otimes \delta(b_{\mu\lambda})\right)$ of $(N\otimes_BJ)^{\natural}$ which is constructed using the inductive step and note that it is a cycle in the DG $B$-module $N\otimes_B J$, by Proposition~\ref{disc20210806a}.
This $\xi_\lambda$ defines an element $[\xi_\lambda]$ in $\HH_{|e_\lambda|}(N\otimes_B J)$. By assumption, we obtain $\gamma_\lambda\in (N\otimes_BJ)^{\natural}$ satisfying $\partial^{N\otimes_BJ}(\gamma_\lambda)=\xi_\lambda$, that is, $[\xi_\lambda]=0$ in $\HH_{|e_\lambda|}(N\otimes_B J)$.
\end{disc}

The following result is an application of Theorem~\ref{20210626b} and will be used to construct Examples~\ref{ex20210815a} and~\ref{ex20210815g} below.

\begin{cor}\label{20210621}
Let $N$ be a semifree DG $B$-module with a semifree basis consisting of only two elements $\{e,\,e'\}$ with $\partial^N(e')=eb$ for $b\in B^{\natural}$.
Then the following hold:
\begin{enumerate}[\rm(a)]
\item If $\delta(b)$ is a boundary of $J$, then $N$ is na\"{i}vely liftable to $A$.
\item Conversely, if $A_0= B_0$ and $N$ is na\"{i}vely liftable to $A$, then $\delta(b)$ is a boundary of $J$.
\end{enumerate}
\end{cor}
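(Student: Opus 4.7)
The plan is to apply Theorem~\ref{20210626b} directly. Since the semifree basis $\{e,e'\}$ has just two elements with $e$ minimal and $\partial^N(e')=eb$, the structure constants reduce to the single element $b_{e,e'}=b$, and $\partial^N(e)=0$ trivially. The naïve liftability criterion (ii) of Theorem~\ref{20210626b} thus becomes: there exist $\gamma_e,\gamma_{e'}\in (N\otimes_BJ)^{\natural}$ with $|\gamma_e|=|e|$ and $|\gamma_{e'}|=|e'|$ such that $\partial^{N\otimes_BJ}(\gamma_e)=0$ and $\partial^{N\otimes_BJ}(\gamma_{e'})=\gamma_e b+e\otimes\delta(b)$.

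For part (a), assume $\delta(b)=\partial^J(\eta)$ for some $\eta\in J^{\natural}$; a degree count forces $|\eta|=|b|+1$. Set $\gamma_e=0$ and $\gamma_{e'}=(-1)^{|e|}\,e\otimes\eta$. A quick degree check shows $|\gamma_{e'}|=|e|+|b|+1=|e'|$. Applying the Leibniz rule together with $\partial^N(e)=0$ gives
\begin{equation*}
\partial^{N\otimes_BJ}(\gamma_{e'})=(-1)^{|e|}\cdot(-1)^{|e|}e\otimes\partial^J(\eta)=e\otimes\delta(b)=\gamma_e b+e\otimes\delta(b),
\end{equation*}
so the hypotheses of Theorem~\ref{20210626b}(ii) are met and $N$ is naïvely liftable to $A$.

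For part (b), assume $A_0=B_0$ and $N$ is naïvely liftable. The hypothesis gives $B^e_0=B_0\otimes_{A_0}B_0=B_0$ and $\pi|_{B^e_0}=\id$, so $J_0=0$. Since the basis $\{e,e'\}$ induces a decomposition $(N\otimes_BJ)^{\natural}=(e\otimes J^{\natural})\oplus (e'\otimes J^{\natural})$, write $\gamma_e=e\otimes u+e'\otimes v$ and $\gamma_{e'}=e\otimes u'+e'\otimes v'$. The degree constraints force $|u|=0$, $|v|=|e|-|e'|=-|b|-1<0$, $|u'|=|b|+1$, and $|v'|=0$. Using $J_0=0$ and the fact that $J$ is non-negatively graded, we obtain $u=v=v'=0$, so $\gamma_e=0$ and $\gamma_{e'}=e\otimes u'$ with $u'\in J^{\natural}$ of degree $|b|+1$. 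Substituting into the coboundary equation and using $\partial^N(e)=0$ gives
\begin{equation*}
(-1)^{|e|}e\otimes \partial^J(u')=\partial^{N\otimes_BJ}(e\otimes u')=e\otimes\delta(b),
\end{equation*}
whence $\delta(b)=\partial^J\bigl((-1)^{|e|}u'\bigr)$ is a boundary in $J$.

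There is no real obstacle to overcome here; the argument is an essentially direct bookkeeping exercise against Theorem~\ref{20210626b}. The only point requiring care is the degree analysis in part (b): one must combine the hypothesis $A_0=B_0$ (to get $J_0=0$) with the non-negative grading of $B$ (to force $v=0$) in order to conclude $\gamma_e=0$ and $v'=0$, which is what reduces the coboundary condition to a statement about $\delta(b)$ alone.
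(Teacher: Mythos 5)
Your proposal is correct and takes essentially the same route as the paper's proof: both parts apply Theorem~\ref{20210626b} with $\gamma_e=0$ and $\gamma_{e'}=e\otimes c$ for a preimage $c$ of $\delta(b)$ under $\partial^J$, and in part (b) both use $A_0=B_0\Rightarrow J_0=0$ together with the non-negative grading to force $\gamma_e=0$ and $\gamma_{e'}\in e\otimes J^{\natural}$. The only difference is cosmetic: you carry the Leibniz sign $(-1)^{|e|}$ explicitly, which the paper suppresses --- harmlessly, since $\partial^N(e)=0$ and the sign can be absorbed into the choice of $c$.
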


\begin{proof}
(a) By assumption, there is $c\in J^{\natural}$ such that $\delta(b)=\partial^J(c)$.
Let $\gamma=0$ be in $(N\otimes_B J)_{|e|}$ and $\gamma'=e\otimes c$, where $\gamma' \in (N\otimes_B J)_{|e'|}$.
Note that $|\partial^N(e)|=|e|-1$. Hence, $\partial^N(e)=0$ and we have
$$\partial^{N\otimes_B J}(\gamma')= \partial^N(e)\otimes c + e \otimes \partial^J(c)=e \otimes \delta(b)=\gamma b +e \otimes \delta(b).$$
This means that $\{\gamma,\gamma'\}$ satisfies condition (ii) in Theorem~\ref{20210626b}.
Hence, $N$ is na\"{i}vely liftable to $A$.

(b) The assumption $B_0=A_0$ implies $J_0=0$.
Then, we have $(N\otimes_B J)_n=\bigoplus_{i>0}N_{n-i}\otimes J_i$.
In particular, $(N\otimes_B J)_{|e|}=0$.
Since $N$ is na\"{i}vely liftable to $A$, it follows from Theorem~\ref{20210626b} that there are elements
$\gamma=0$ in $(N\otimes_B J)_{|e|}$ and $\gamma'\in(N\otimes_B J)_{|e'|}$ such that
$\partial^{N\otimes_B J}(\gamma')=\gamma b +e \otimes \delta(b)=e \otimes \delta(b)$.
We know that $\gamma'=e\otimes c$, for some $c\in J$. Therefore,
$\partial^{N\otimes_B J}(\gamma')=\partial^{N\otimes_B J}(e\otimes c)=e\otimes \partial^J(c)$.
Hence, $\delta(b)=d^J(c)$ and this means that $\delta(b)$ is a boundary of $J$.
\end{proof}

\begin{cor}\label{cor20210815a}
Consider the assumption of Corollary~\ref{20210621}.
If $\HH_{|e'|-|e|-1}(J)=0$, then $N$ is na\"{i}vely liftable to $A$.
\end{cor}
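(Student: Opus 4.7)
The strategy is immediate: apply Corollary~\ref{20210621}(a), which tells us it suffices to verify that $\delta(b)$ is a boundary in $J$. Given the vanishing hypothesis $\HH_{|e'|-|e|-1}(J)=0$, we only need to check two things: that $\delta(b)$ lives in the correct homological degree, and that it is a cycle in $J$.

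For the degree, since the universal derivation $\delta\colon B^\natural \to J^\natural$ has degree $0$ (see Definition~\ref{para20210317c}), $|\delta(b)| = |b|$. From $\partial^N(e') = eb$ and $|\partial^N(e')| = |e'| - 1$, we get $|b| = |e'| - |e| - 1$, so $\delta(b) \in J_{|e'|-|e|-1}$.

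For the cycle condition, the key observation is that $b$ itself is a cycle in $B$. This follows by applying $(\partial^N)^2 = 0$ to $e'$: since $e$ is necessarily the minimum basis element (so $\partial^N(e) = 0$ by semifreeness), the Leibniz rule gives $0 = (\partial^N)^2(e') = \partial^N(e)b + (-1)^{|e|}e\,d^B(b) = (-1)^{|e|} e\, d^B(b)$, and the fact that $\{e, e'\}$ is a $B^\natural$-basis forces $d^B(b) = 0$. Combining this with equation~\eqref{eq20210523s}, which says $\delta$ commutes with differentials, yields $\partial^J\delta(b) = \delta(d^B(b)) = 0$.

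Thus $\delta(b)$ is a cycle of degree $|e'|-|e|-1$ in $J$, and the hypothesis $\HH_{|e'|-|e|-1}(J)=0$ produces the required $c\in J^\natural$ with $\partial^J(c) = \delta(b)$. Corollary~\ref{20210621}(a) then concludes that $N$ is naïvely liftable to $A$. There is no real obstacle here; the only thing worth emphasizing in the write-up is the small computation showing $d^B(b)=0$, since the remainder is a direct quotation of previously established facts.
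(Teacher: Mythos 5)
Your proof is correct and follows essentially the same route as the paper's: deduce $d^B(b)=0$ from $(\partial^N)^2(e')=0$ together with $\partial^N(e)=0$, conclude via~\eqref{eq20210523s} that $\delta(b)$ is a cycle, use the vanishing of $\HH_{|e'|-|e|-1}(J)$ to make it a boundary, and invoke Corollary~\ref{20210621}(a). Your explicit check that $\delta(b)$ sits in degree $|e'|-|e|-1$ is a small addition the paper leaves implicit, but it does not change the argument.
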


\begin{proof}
Note that $0=(\partial^N)^2(e')=\partial^N(eb)=(-1)^{|e|}ed^B(b)$. This implies that $d^B(b)=0$. It follows from~\eqref{eq20210523s} that $\partial^J\delta(b)=\delta d^B(b)=0$, Hence, $\delta(b)$ is always a cycle in $J$. By our assumption, $\delta(b)$ is a boundary of $J$ as well. Now, it follows from Corollary~\ref{20210621}(a) that $N$ is na\"{i}vely liftable to $A$.
\end{proof}


Next, we construct a DG module that satisfies na\"{i}ve liftability. For the notation and more details about free extensions of DG algebras see~\cite{NOY2}.

\begin{ex}\label{ex20210815a}
Assume $x,y$ are non-zero elements of $R$ such that $xR\cap yR = (0)$. Let $B=R\langle X,Y\mid dX=x, dY=Xy\rangle$ be a free extension of the DG $R$-algebra $R$ with $|X|=1$ and $|Y|=2$. Let $N$ be the semifree DG $B$-module with $N^{\natural}=eB^{\natural}\oplus e'B^{\natural}$ and with the differential defined by $\partial^N(e)=0$ and $\partial^N(e')=eYXy$. By~\eqref{eq20210523s} we have $\delta(YXy)=\delta (d^B(Y^{(2)}))=\partial^J\delta(Y^{(2)})$. Now it follows from Corollary~\ref{20210621}(a) that $N$ is na\"{i}vely liftable to $R$.
\end{ex}

In the next example we construct a DG module that does not satisfy na\"{i}ve liftability.

\begin{ex}\label{ex20210815g}
Consider the setting of Example~\ref{ex20210815a}. Let $M$ be the semifree DG $B$-module with $M^{\natural}=uB^{\natural}\oplus u'B^{\natural}$ and with the differential defined by $\partial^{M}(u)=0$ and $\partial^M(u')=uYXx$, where $|u|=0$.
\vspace{5pt}

\noindent \textbf{Claim}: $M$ is not na\"{i}vely liftable to $R$.
\vspace{5pt}

To prove the claim, set
\begin{align*}
T&=B\langle X', Y' \mid dX'=0, dY'= X'y \rangle\\
&=R\langle X, X', Y, Y' \mid dX=x, dX'=0, dY = Xy, dY'= X'y \rangle
\end{align*}
where $|X'|=|X|=1$ and $|Y'|=|Y|=2$.
Note that
$$
T^{\natural}=\bigoplus_{n\geq 0} \left(Y'^{(n-1)}X'B^{\natural}\oplus Y'^{(n)}B^{\natural}\right)
$$
with the convention $Y^{(-1)}=0$.
The map
$f\colon B^e \to T$
defined by $f(1\otimes X)=X,\,f(X\otimes 1)=X'+X,\,f(1\otimes Y^{(n)})=Y^{(n)},\,f(Y^{(n)}\otimes 1)=(Y'+Y)^{(n)}$ is a DG $R$-algebra isomorphism.
In particular, if we consider the DG ideal $J'$ of $T$, where $J'^{\natural}=\bigoplus_{n> 0} \left(Y'^{(n-1)}X'B^{\natural}\oplus Y'^{(n)}B^{\natural}\right)$ and $\partial^{J'}$ is induced by $d^T$, then we obtain an isomorphism
$f|_J\colon J  \xra{\cong}  J'$ which induces a mapping $\HH(f|_J)\colon \HH(J)\to \HH(J')$ on homology.
Note that $f(\delta(YXx))=X'Yx+Y'Xx+X'Y'x$. Since $\partial^{J'}(-X'XY)=X'Yx$ and $\partial^{J'}(-X'Y'X)=X'Y'x$,
we have the following equalities:
$$\HH(f|_J)([\delta(YXx)])=[X'Yx+Y'Xx+X'Y'x]=[Y'Xx].$$
We show that $[Y'Xx]\neq 0$ in $\HH_3(J')$.
To see this, note that for a general element $X'XYa+Y'Yb\in(X'B\oplus Y'B)_4=X'XYR\oplus Y'YR$ with $a,b\in R$ we have
$$
\partial^{J'}(X'XYa+Y'Yb)=X'Y(-xa+yb)+Y'Xyb.
$$
If $[Y'Xx]=0$, then there is $b\in R$ such that $yb=x$.
It then follows from the assumption $xR\cap yR=0$ that $x=0$.
This contradicts the fact that $x$ is nonzero.
Hence, $[Y'Xx]\neq 0$, that is, $\delta(YXx)$ is not boundary in $J$.
It then follows from Corollary~\ref{20210621}(b) that $M$ is not na\"{i}vely liftable to $R$, as desired.
\end{ex}



\providecommand{\bysame}{\leavevmode\hbox to3em{\hrulefill}\thinspace}
\providecommand{\MR}{\relax\ifhmode\unskip\space\fi MR }
\providecommand{\MRhref}[2]{%
  \href{http://www.ams.org/mathscinet-getitem?mr=#1}{#2}
}
\providecommand{\href}[2]{#2}

\end{document}